\newtheorem{exer}{\sffamily\bfseries Ejercicio}
\newtheorem{teo}{Theorem}
\newtheorem{lem}[teo]{Lemma}
\newtheorem{prop}[teo]{Proposition}
\newtheorem{rem}[teo]{Remark}
\numberwithin{equation}{section}
\newcommand{\kah}{\mathcal{K}(\mathcal{H})^{ah}}
\newcommand{\kh}{\mathcal{K}(\mathcal{H})^{h}}
\newcommand{\uc}{\mathcal{U}_c(\mathcal{H})}
\newcommand{\oa}{\mathcal{O}_A}
\newcommand{\uu}{\mathcal{U}}
\newcommand{\aaa}{\mathcal{A}}
\newcommand{\bb}{\mathcal{B}}
\newcommand{\bah}{\mathcal{B}(\mathcal{H})^{ah}}
\newcommand{\bh}{\mathcal{B}(\mathcal{H})}
\newcommand{\kk}{\mathcal{K}}
\newcommand{\PP}{\mathcal{P}}
\newcommand{\h}{\mathcal{H}}
\newcommand{\N}{\mathbb N}
\newcommand{\C}{\mathbb C}
\newcommand{\R}{\mathbb R}
\newcommand{\I}{\mathcal I}
\newcommand{\longi}{{\rm L}}
\newcommand{\dist}{{\rm dist}}
\newcommand{\bit}{\begin{itemize}}
\newcommand{\eit}{\end{itemize}}
\newcommand{\be}{\begin{enumerate}}
\newcommand{\ee}{\end{enumerate}}
\newcommand{\bx}[1]{\begin{exer}\rm{#1}}
\newcommand{\ex}{\end{exer}}
\newcommand{\ba}{\begin{array}}
\newcommand{\ea}{\end{array}}
\newcommand{\bc}{\begin{center}}
\newcommand{\ec}{\end{center}}
\newcommand{\g}{\gamma}
\newcommand{\s}{\beta}
\newcommand{\de}{\delta}
\newcommand{\D}{\mathcal{D}}
\newcommand{\bq}{\begin{equation}}
\newcommand{\eq}{\end{equation}}
\begin{document}

\title{\vspace*{0cm}\textbf{Minimal length curves in unitary orbits of a Hermitian compact operator} }
%
%

\author{Tamara Bottazzi $^1$ and Alejandro Varela$^{1,2}$}
\address{$^1$ Instituto Argentino
de Matem\'atica ``Alberto P. Calder\'on'', Saavedra 15 3? piso,
(C1083ACA) Ciudad Aut\'onoma de Buenos Aires, Argentina} 

\address{$^2$ Instituto de Ciencias, Universidad Nacional de General Sarmiento, J.
M. Gutierrez 1150, (B1613GSX) Los Polvorines, Pcia. de Buenos Aires, Argentina} 
\email{tpbottaz@ungs.edu.ar, avarela@ungs.edu.ar}

\subjclass[2010]{MSC: Primary: 22F30, 43A85, 47B15, 47A58, 53C22. Secondary: 47B07, 47B10, 47C15.}
\keywords{
Unitary orbits, geodesic curves, minimal operators in quotient spaces, approximation of minimal length curves.}

\begin{abstract} 
We study some examples of minimal length curves in homogeneous spaces of $\bb(\h)$ under a left action of a unitary group. Recent results relate these curves with the existence of minimal (with respect to a quotient norm) anti-Hermitian operators $Z$ in the tangent space of the starting point. We show minimal curves that are not of this type but nevertheless can be approximated uniformly by those.
\end{abstract}

\maketitle

\section{Introduction} \label{intro}
Let $\h$ be a separable Hilbert space and $\kk(\h)$ be the algebra of compact operators. In this work we consider the orbit manifold of a self-adjoint compact operator $A$ by a particular unitary group, that is
$$
\mathcal{O}_A=\{uAu^*: u \text{ unitary in } \bb(\h) \text{ and } u-1\in\mathcal{K}(\h)\}.
$$
Given two points, $x,y\in \oa$, the rectifiable distance between them is the infimum of the lengths of all the smooth curves in $\oa$ that join $x$ and $y$. Our purpose is to study the existence and properties of some particular minimal length curves in $\oa$.

The tangent space at any $b\in\mathcal{O}_A$ is 
$$(T\oa)_b=\{zb-bz: z\in \kk(\h),\ z^*=-z \}$$
endowed with the Finsler metric given by the usual operator norm $\left\|\cdot\right\|$. If $x\in (T\oa)_b$, the existence of a (not necessarily unique) minimal element $z_0$ such that
$$\left\|x\right\|_b=\|z_0\|=\inf \left\{\left\|z\right\|:z\in \kk(\h),\ z^*=-z, \ zb-bz=x \right\}$$
allows in \cite{andruchow larotonda} the description of minimal length curves of the manifold by the parametrization
$$\gamma(t)=e^{tz_0}\  b\  e^{-tz_0} ,\ t\in\left[-\frac{\pi}{2\left\|z_0\right\|},\frac{\pi}{2\left\|z_0\right\|}\right].$$
These $z_0$ can be described as $i(C+D)$, with $C\in\kk(\h)$, $C^*=C$ and $D$ a real diagonal operator in an orthonormal basis of eigenvectors of $A$.

If we consider $\bb\subset\aaa$ von Neumann algebras and $a\in\mathcal{A}$, $a^*=a$, there always exists an element $b_0$ in $\mathcal{B}$ such that $\left\|a+b_0\right\|\leq \left\|a+b\right\|$, for all $b\in \mathcal{B}$ (see \cite{dmr1}). The element $a_0+b_0$ is called minimal in the class $[a_0]$ of $\aaa^h/\bb^h$. However, in the case of $\aaa=\kk(\h)$, a $C^*$-algebra which is not a von Neumann algebra, and $\bb\subset \kk(\h)$ a subalgebra there is not always a minimal compact operator in any class in $\kk(\h^h)/\bb^h$. In \cite{bv} we exhibit an example of this fact. In this case, the existence of a best approximant for $C\in \kk(\h)$, $C^*=C$ is guaranteed when $C$, for example, has finite rank (see Proposition 5.1 in\cite{andruchow larotonda}). 

The above motivated us to study the following, among other issues, in the unitary orbit of a Hermitian operator. Let $b\in \oa$ and $x\in (T\oa)_b$ and suppose that there exists a uniparametric curve $\psi(t)=e^{tZ}be^{-tZ}$ which is a minimal length curve among all the smooth curves joining $b$ and $\psi(t)$ in $\oa$ for $t\in\left[-\frac{\pi}{2\left\|Z\right\|},\frac{\pi}{2\left\|Z\right\|}\right]$:
\bit
\item  Would $Z$ be a compact minimal lifting of $x$ (i.e $x=Zb-bZ$ and $\left\|Z\right\|=\left\|x\right\|_b$)?
\item Can $\psi$ be approximated in $\oa$ by a sequence of minimal length curves of matrices?
\eit
The present work continues the analysis made in \cite{andruchow larotonda} of this homogeneous spaces and we use minimality characterizations that we developed in \cite{bv}.

The results in this paper are divided in three parts. In the first we describe and study minimal length curves in the orbit of a particular compact Hermitian operator. In the second part we construct a sequence of minimal length curves of matrices which converges uniformly to the minimal length curves found in the first part. Finally, in the third part we study cases of anti-Hermitian compact operators such that their best bounded diagonal approximants are not compact and the properties of the minimal curves they determinate.

\section{Preliminaries and notation} \label{preliminares}

Let $(\h,\left\langle ,\right\rangle)$ be a separable Hilbert space. We denote by $\left\|h\right\|=\left\langle h,h \right\rangle^{1/2}$ the norm for each $h\in\h$.
Let $\bb(\h)$ denote the set of bounded operators (with the identity operator $I$) and $\kk(\h)$, the two-sided closed ideal of compact operators on $\h$.
Given $\aaa\subset\bb(\h)$, we use the superscript $^{ah}$  (resp. $^{h}$) to note the subset of anti-Hermitian (resp. Hermitian) elements of $\aaa$.

We consider the group of unitary operators in $\bb(\h)$
$$\uu(\h)=\{u\in \bb(\h):\ uu^*=u^*u=I\}$$
and the unitary Fredholm group, defined as
$$\uc=\{u\in \mathcal{U}(\h): u-I\in \kk(\h)\}.$$

We denote with $\left\|\cdot\right\|$ the usual operator norm in $\bb(\h)$ and with $[\ ,\ ]$ the commutator operator, that is, for any $T,S\in \bb(\h)$
$$[T,S]=TS-ST.$$
It should be clear from the context the use of the same notation $\left\|\cdot\right\|$ to refer to the operator norm or the norm on $\h$.

We define the unitary orbit of a fixed $A\in \kk(\h)$, $A=A^*$, as
\begin{equation}
\mathcal{O}_A=\{uAu^*:u\in\mathcal{U}_c(\h)\}\ \subset\ \kk(\h).\label{orbita}
\end{equation}
$\oa$ is an homogeneous space if we consider the action $\pi_b:\uu_c(\h)\to\oa$, $\pi_b(u)=ubu^*$. For each $b\in \oa$, the isotropy group $\I_b$ is
$$\I_b=\{u\in \uc:\ ubu^*=b\}.$$
Since for each $u\in \uc$ there always exists $X\in \kah$ such that $u=e^X$ (see Proposition \ref{caracterizacion de unitarios}), the isotropy can be redefined by
$$\I_b=\{e^X\in \uc:\ X\in \kah,\ [X,b]=0 \}.$$
For each $b\in \oa$, its tangent space is
$$(T\oa)_b=\{Yb-bY: Y\in\kah \}\subset \kah.$$
Consider a smooth curve (i.e. $C^1$ and with derivative non equal to zero) $u:[0,1]\to \uc$  such that $u(0)=1$ y $u'(0)=Y$, then the differential of the surjective map $\pi_b$ at $1$ is
$$(d\pi_b)_1(Y)=\frac{d}{dt}\left.\pi_b(u(t))\right|_{t=0}=u'(0)b\ u^*(0)+u(0)b\ u'(0)^*$$
$$=Yb1^*+1bY^*=Yb-bY=[Y,b].$$
For every $b\in \oa$ we consider each tangent space as 
$$(T\oa)_b\cong(T\uc)_1/(T\I_b)1\cong\kah/(\{b\}')^{ah},$$
being $\{b\}'$ the set of elements that commute with $b$ in a $C^*$-algebra $\aaa$ (in this particular case $\aaa=\kk(\h)$). 
Let us consider the Finsler metric, defined for each $x\in (T\oa)_b$ as
$$\|x\|_b=\inf\{\|Y\|: Y\in\kah \hbox{ such that } [Y,b]=x\}$$
This metric can be expressed in terms of the projection to the quotient $\kah/(\{b\}')^{ah}$ as
$$\left\|Yb-bY\right\|_b=\left\|\left[Y\right]\right\|=\inf_{C\in (\{b\}')^{ah}}\ \left\|Y+C\right\|$$
for each class $[Y]=\left\{Y+C:\ C\in (\{b\}')^{ah}\right\}$. This Finsler norm is invariant under the action of $\uc$. 

There always exists $Z\in \bb(\h)^{ah}$ such that $[Z,b]=x$ and $\left\|Z\right\|=\left\|x\right\|_b$. Such element $Z$ is called minimal lifting for $x$, and $Z$ may not be compact and/or unique (see \cite{bv}).
Consider piecewise smooth curves $\beta:[a,b]\to \oa$. We define the rectifiable length of $\beta$ as 
$$\longi(\beta)=\int_a^b\left\|\beta'(t)\right\|_{\beta(t)}\ dt,$$
and the rectifiable distance between two points of $\oa$, named $c_1$ and $c_2$, as
$$\dist(c_1,c_2)=\inf\{\longi(\beta):\beta\ {\rm is\ smooth},\beta(a)=c_1,\beta(b)=c_2\}.$$
If $\aaa$ is any $C^*$-algebra of $\bh$ and $\left\{e_k\right\}_{k=1}^{\infty}$ is a fixed orthonormal basis of $\h$, we denote with $\D(\aaa)$ the set of diagonal operators with respect to this basis, that is
$$\D(\aaa)=\left\{T\in \aaa:\ \left\langle Te_i,e_j\right\rangle=0\ ,\ \text{ for all } i\neq j\right\}.$$
Given an operator $Z\in \aaa$, if there exists an operator $D_1\in \D(\aaa)$ such that
$$\left\|Z+D_1\right\|={\rm dist}\left(Z,\D\left(\aaa\right)\right),$$
we say that $D_1$ is a best approximant of $Z$ in $\D(\aaa)$. In other terms, the operator $Z+D_1$ verifies the following inequality
$$\left\|Z+D_1\right\|\leq \left\|Z+D\right\|$$
for all $D\in \D(\aaa)$. In this sense, we call $Z+D_1$ a minimal operator or similarly we say that $D_1$ is minimal for $Z$. If $Z$ is anti-Hermitian it holds that 
$${\rm dist}\left(Z,\D\left(\aaa\right)\right)={\rm dist}\left(Z,\D\left(\aaa^{ah}\right)\right),$$
since $\left\|Im(X)\right\|\leq \left\|X\right\|$ for every $X\in \aaa$.

Let $T\in \bb(\h)$ and consider the coefficients $T_{ij}=\left\langle Te_i,e_j\right\rangle$ for each $i,j\in \N$, that define an infinite matrix $\left(T_{ij}\right)_{i,j\in \N}$. The $j$th-column and $i$th-row of $T$ are the vectors in $\ell^2$ given by $c_j(T)=\left(T_{1j},T_{2j},...\right)$ and $f_j(T)=\left(T_{i1},T_{i2},...\right)$, respectively.

We use $\sigma(T)$ and $R(T)$ to denote the spectrum and range of $T\in \bh$, respectively. 

We define $\Phi:\bb(\h)\to \D(\bb(\h))$, $\Phi(X)= {\rm Diag}(X)$, that takes the main diagonal (i.e the elements of the form $\{\left\langle Xe_i,e_i\right\rangle\}_{i\in\N}$) of an operator $X$ and builds a diagonal operator in the chosen fixed basis of $\h$. For a given bounded sequence $\{d_n\}_{n\in\N}\subset \C$ we denote with ${\rm Diag}\big( \{d_n\}_{n\in\N}\big)$ the diagonal (infinite) matrix with $\{d_n\}_{n\in\N}$ in its diagonal  and $0$ elsewhere.

The following theorem is similar to Theorem 1 in \cite{bv} but this version only requires that $T\in \bh^h$, instead of $T\in \kh$. The proof is exactly the same tha the one in the case where $T$ is compact. 

\begin{teo} \label{teo minimal}
Let $T\in \bh^h$ described as an infinite matrix by $\left(T_{ij}\right)_{i,j\in \N}$.
Suppose that $T$ satisfies:
\be
\item $T_{ij}\in\R$ for each $i,j\in \N$,
\item there exists $i_0\in \N$ satisfying $T_{i_0 i_0}=0$, with $T_{i_0 n}\neq 0$, for all $n\neq i_0$,
\item if $T^{[i_0]}$ is the operator $T$ with zero in its $i_0$th-column and $i_0$th-row then $$
\left\|c_{i_0}(T)\right\|\geq \left\|T^{[i_0]}\right\|
$$
(where $\left\|c_{i_0}(T)\right\|$ denotes the Hilbert norm of the $i_0$th-column of $T$), and
\item if the $T_{nn}$'s satisfy that, for each $n\in\N$, $n\neq i_0$:
$$
T_{nn}=-\dfrac{\left\langle c_{i_0}(T),c_n(T)\right\rangle}{T_{i_0n}}.
$$
\ee
Then $T$ is minimal, that is
$$
\left\|T\right\|=\left\|c_{i_0}(T)\right\|=\inf_{D\in \D(\bh)}\left\|T+D\right\|=\inf_{D\in \D(\kh)}\left\|T+D\right\|
$$
and moreover, $D=\text{Diag}\big(\{T_{nn}\}_{n\in\N}\big)$ is the unique bounded minimal diagonal operator for $T$.  
\end{teo}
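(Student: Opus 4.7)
The plan is to exhibit $e_{i_0}$ as a norm-attaining unit vector for $T$ and then control $\|T+D\|$ by evaluating on the two vectors $e_{i_0}$ and $v := T e_{i_0} = c_{i_0}(T)$. Since $T e_{i_0} = c_{i_0}(T)$, the lower bound $\|T\| \geq \alpha := \|c_{i_0}(T)\|$ is automatic.

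For the matching upper bound one computes $Tv$ entry-wise. One has $(Tv)_k = \sum_n T_{kn} T_{n i_0}$. The $n=i_0$ summand vanishes by hypothesis~(2), and after isolating $n = k$ and using the symmetry $T_{kn} = T_{nk}$, hypothesis~(4) is exactly the algebraic identity that forces the remainder to cancel for every $k \neq i_0$; at $k = i_0$ one collects $\sum_n T_{i_0 n}^2 = \alpha^2$. Hence $Tv = \alpha^2 e_{i_0}$. Noting that $v \perp e_{i_0}$ (because $v_{i_0} = T_{i_0 i_0}=0$), the plane $\mathcal{S} := \mathrm{span}\{e_{i_0}, v/\alpha\}$ is $T$-invariant and $T|_{\mathcal{S}}$ is represented in the orthonormal basis $\{e_{i_0}, v/\alpha\}$ by the symmetric block $\left(\begin{smallmatrix} 0 & \alpha \\ \alpha & 0 \end{smallmatrix}\right)$ of norm $\alpha$. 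By self-adjointness $\mathcal{S}^\perp$ is also $T$-invariant, and since $\mathcal{S}^\perp \subset (\C e_{i_0})^\perp$ the restriction agrees with a restriction of $T^{[i_0]}$, whose norm is bounded by $\alpha$ by hypothesis~(3). Combining these two pieces yields $\|T\| = \alpha$.

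For any diagonal $D$, the orthogonality $v \perp e_{i_0}$ gives $\|(T+D)e_{i_0}\|^2 = \alpha^2 + d_{i_0}^2 \geq \alpha^2$, so $\|T+D\| \geq \alpha$; since $D = 0 \in \D(\kh) \subset \D(\bh)$ realizes equality, both infima in the statement equal $\alpha$. For the uniqueness of the minimizing diagonal, assume $\|T+D\| = \alpha$: the previous estimate forces $d_{i_0} = 0$, and then $(T+D)v = \alpha^2 e_{i_0} + Dv$ with $\langle e_{i_0}, Dv\rangle = d_{i_0} v_{i_0} = 0$ gives $\|(T+D)v\|^2 = \alpha^4 + \|Dv\|^2 \leq \alpha^2\|v\|^2 = \alpha^4$, so $Dv = 0$. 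Because $v_n = T_{i_0 n} \neq 0$ for every $n \neq i_0$ by hypothesis~(2), this forces $d_n = 0$ for all $n$, pinning down the minimizer uniquely.

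The main obstacle is the entry-wise derivation of $Tv = \alpha^2 e_{i_0}$: hypothesis~(4) is an implicit algebraic condition tying each $T_{kk}$ to an inner product of columns, and one has to split the summation carefully (isolating $n = i_0$, $n = k$, and the remaining indices) and use self-adjointness to see that (4) is precisely the cancellation identity needed. Everything downstream --- the two-dimensional symmetric block, the self-adjoint invariance of $\mathcal{S}^\perp$, and the two norm evaluations at $e_{i_0}$ and $v$ --- is then routine.
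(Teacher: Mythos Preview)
The paper does not actually give its own proof of this theorem: it simply states that ``the proof is exactly the same'' as in \cite{bv} (the compact case) and moves on. So there is no in-paper argument to compare against; I can only assess whether your argument is complete.

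Your proof is correct. The operational content of hypothesis~(4) --- as confirmed by the paper's own worked example $Z_r$ and by the later conditions in \eqref{condicion para D'} and the definition of $D_n$ --- is precisely that $c_{i_0}(T)\perp c_n(T)$ for every $n\neq i_0$. Under that reading your key identity $Tv=\alpha^2 e_{i_0}$ follows immediately, since $(Tv)_k=\langle c_k(T),c_{i_0}(T)\rangle$ for a real symmetric $T$. The rest of the argument is clean: the two-dimensional invariant block $\begin{pmatrix}0&\alpha\\ \alpha&0\end{pmatrix}$, the observation that $T|_{\mathcal{S}^\perp}$ coincides with a restriction of $T^{[i_0]}$ (which uses both $w\perp e_{i_0}$ and $w\perp v$), and the two evaluations $\|(T+D)e_{i_0}\|$ and $\|(T+D)v\|$ to extract minimality and uniqueness, all go through as you wrote. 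One small clarification worth making explicit: the uniqueness step works for arbitrary $D\in\D(\bh)$ (not just real-diagonal), because $\|(T+D)e_{i_0}\|^2=\alpha^2+|d_{i_0}|^2$ and $(Dv)_n=d_n v_n$ with $v_n\in\R\setminus\{0\}$ force $d_n=0$ regardless of whether $d_n$ is complex.
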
 
\section{The unitary Fredholm orbit of a Hermitian compact operator}
In this section we consider the unitary Fredholm orbit $\oa$ of a particular case of a Hermitian compact operator, that is: $A\in \kh$, $A=u{\rm Diag}\left(\{\lambda_i\}_{i\in\N}\right)u^*$, with $u\in \uc$ and $\{\lambda_i\}_{i\in\N}\subset \R$ such that $\lambda_i\neq \lambda_j$ for each $i\neq j$. Consider $\oa$ as defined in section \ref{preliminares} and $b={\rm Diag}\left(\{\lambda_i\}_{i\in\N}\right)\in \oa$. The isotropy $\I_b$ is the set $\{e^d:\ d\in \D(\kah)\}$ and $(T\oa)_b$ can be identified with the quotient space $\kah/\D(\kah)$. 
\begin{prop}
Let $b={\rm Diag}\left(\{\lambda_i\}_{i\in\N}\right)\in \oa$. For each $x\in (T\oa)_b$, if $Z\in \kah$ is such that $[Z,b]=x$, then
\begin{equation} \label{larga}
\|x\|_b=\inf_{D\in \D(\kah)}\left\|Z+D\right\|
\end{equation}
\end{prop}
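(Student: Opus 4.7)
The statement reduces to identifying the quotient description of $\|x\|_b$ with the explicit one in terms of the fixed lifting $Z$, and the whole content is the characterization of the commutant of $b$ within $\kah$.

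My plan is as follows. First I would recall from the preliminaries that
\[
\|x\|_b=\inf\{\|Y\|:Y\in\kah,\ [Y,b]=x\},
\]
and that $\|x\|_b=\|[Y]\|$ where $[Y]$ is the class of $Y$ in $\kah/(\{b\}')^{ah}$. Fixing any particular lifting $Z\in\kah$ of $x$, a general lifting $Y\in\kah$ is of the form $Y=Z+C$ with $C\in\kah$ satisfying $[C,b]=0$. Hence the whole statement boils down to showing
\[
\{C\in\kah:[C,b]=0\}=\D(\kah).
\]

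The inclusion $\supset$ is immediate, since $b$ is diagonal in the basis $\{e_i\}_{i\in\N}$. For the inclusion $\subset$, I would use that the eigenvalues $\{\lambda_i\}_{i\in\N}$ of $b$ are pairwise distinct. If $C\in\bh$ commutes with $b$, then for any $i\neq j$ one has
\[
(\lambda_i-\lambda_j)\langle Ce_j,e_i\rangle=\langle Cbe_j,e_i\rangle-\langle bCe_j,e_i\rangle=\langle[C,b]e_j,e_i\rangle=0,
\]
so $\langle Ce_j,e_i\rangle=0$ for $i\neq j$, i.e.\ $C\in\D(\bh)$. Intersecting with $\kah$ gives $C\in\D(\kah)$.

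Combining these observations,
\[
\|x\|_b=\inf_{Y\in\kah,\,[Y,b]=x}\|Y\|=\inf_{D\in\D(\kah)}\|Z+D\|,
\]
which is the claim. I do not anticipate any real obstacle: the only subtle point, and hence the one I would make sure to state carefully, is that distinctness of the $\lambda_i$'s is precisely what forces the commutant of $b$ to be diagonal, and that the intersection of this commutant with $\kah$ is $\D(\kah)$ (rather than a larger set of bounded diagonals) because we are infimizing over \emph{compact} liftings in the definition of the Finsler norm.
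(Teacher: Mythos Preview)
Your proposal is correct and follows essentially the same approach as the paper: both identify the set of compact anti-Hermitian liftings of $x$ as $\{Z+D:D\in\D(\kah)\}$ by showing that the commutant of $b$ in $\kah$ consists exactly of the compact diagonal operators. Your argument is in fact slightly more explicit than the paper's, since you spell out the computation $(\lambda_i-\lambda_j)\langle Ce_j,e_i\rangle=0$ and invoke the distinctness of the $\lambda_i$, whereas the paper simply asserts that commutation with the diagonal $b$ forces $D$ to be diagonal.
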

\begin{proof}
If $Y_1, Y_2\in \{ Y\in\kah: [Y,b]=x\}$ then 
$$Y_1-Y_2\in\{D:\ [D,b]=Db-bD=0\}=\{b\}'$$
and since $b$ is a diagonal operator, then every $D$ is diagonal. Thus
$$Y_1-Y_2=D, \hbox{ with } D \hbox{ diagonal }$$
or equivalently: $Y_1=Y_2+D, \hbox{ with } D\in \D(\kah)$.
Then,
$$\|x\|_b=\inf\{\|Y\|: Y\in\kah \hbox{ such that } Y=Y_2+D,
\hbox{ with } D\in \D(\kah)\}.$$\end{proof}

Fix $x=[Z_r,b]=Z_rb-bZ_r\in \bb(\h)^{ah}$, where $Z_r$ is an anti-Hermitian operator defined as the infinite matrix given by
\begin{equation}
Z_r=i\begin{pmatrix}
0&r\g&r\g^2&r\g^3&\cdots\\
r\g&d_2&\g&\g^2&\cdots\\
r\g^2&\g&d_3&\g^2&\cdots\\
r\g^3&\g^2&\g^2&d_4&\cdots\\
\vdots&\vdots&\vdots&\vdots&\ddots\\
\end{pmatrix}\label{defi zr}
\end{equation}
\begin{equation}
=\underbrace{i\begin{pmatrix}
0&r\g&r\g^2&r\g^3&\cdots\\
r\g&0&\g&\g^2&\cdots\\
r\g^2&\g&0&\g^2&\cdots\\
r\g^3&\g^2&\g^2&0&\cdots\\
\vdots&\vdots&\vdots&\vdots&\ddots\\
\end{pmatrix}}_{Y_r}+\underbrace{i\begin{pmatrix}
0&0&0&0&\cdots\\
0&d_2&0&0&\cdots\\
0&0&d_3&0&\cdots\\
0&0&0&d_4&\cdots\\
\vdots&\vdots&\vdots&\vdots&\ddots\\
\end{pmatrix}}_{D_0}=Y_r+D_0.\label{defi Yr}
\end{equation}
The entries of the operator $Z_r$ are such that:
\be
\item $\g\in \R$ such that $\left|\g\right|<1$. 
\item For each $j\in\N$, $j>1$: $d_j=-\frac{1 -\g^{j-2}}{1-\g}-\frac{\g^j}{1-\g^2}$. Notice that $\lim \limits_{j \to \infty}d_j=\frac{1}{\g-1}$.
\item $r\geq \frac{\left\|Y^{[1]}+D_0\right\|}{\left(\sum_{k=1}^{\infty}\g^{2k}\right)^{1/2}}$, where 
$Y^{[1]}=Y_r-\begin{pmatrix}
0&r\g&r\g^2&r\g^3&\cdots\\
r\g&0&0&0&\cdots\\
r\g^2&0&0&0&\cdots\\
r\g^3&0&0&0&\cdots\\
\vdots&\vdots&\vdots&\vdots&\ddots\\
\end{pmatrix}.$
\ee
Observe that the definition of each $d_j$ is independent of the parameter $r$.

The operator $-iZ_r$ fulfills the conditions of minimality stated in Theorem \ref{teo minimal} stated in the Preliminaries and has been studied in \cite{bv}. Therefore, 
$$\left\|[Y_r]\right\|=\inf_{D\in \D(\kah)}\left\|Y_r+D\right\|=\left\|Y_r+D_0\right\|=\left\|Z_r\right\|.$$
Moreover, the diagonal operator $D_0$ is the unique minimal diagonal (bounded, but non compact) operator for $Y_r$. Since $D_0b-bD_0=0$, then $x=Y_rb-bY_r\in (T\oa)_b$ and
$$\left\|x\right\|_b=\left\|Z_rb-bZ_r\right\|_b=\inf_{D\in\D(\kh)}\ \left\|Y_r+D\right\|=\left\|\left[Y_r\right]\right\|=\left\|Z_r\right\|<\left\|Y_r+D\right\|$$
for all $D\in \D(\kah)$. In other words, there is no compact minimal lifting for $x$ in this case.

The following Proposition is a characterization of the unitary Fredholm group in terms of operators in $\kah$.
\begin{prop} \label{caracterizacion de unitarios}
$w\in \uc$ if and only if there exists $X\in \kah$ such that $w=e^X$.
\end{prop}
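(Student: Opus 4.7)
The plan is to address the two implications separately: the forward direction $(\Leftarrow)$ is essentially a direct verification that the exponential of a compact anti-Hermitian operator produces a compact perturbation of the identity within the unitary group, while the reverse direction $(\Rightarrow)$ requires building a compact anti-Hermitian logarithm from the spectral structure of $w$.

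For $(\Leftarrow)$: given $X\in\kah$, I would use the norm-convergent series $e^X=\sum_{n=0}^{\infty} X^n/n!$. Since $X^*=-X$, the identity $(e^X)^*e^X=e^{-X}e^X=I$ shows that $e^X$ is unitary; to see $e^X-I\in\kk(\h)$ I would factor
$$e^X-I = X\cdot\sum_{n=0}^{\infty}\frac{X^{n}}{(n+1)!},$$
and invoke that the second factor is bounded, $X$ is compact, and $\kk(\h)$ is a two-sided ideal. This gives $e^X\in\uc$.

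For $(\Rightarrow)$: given $w\in\uc$, I would exploit that $w=I+K$ with $K$ compact and that $w$ is normal, so Weyl's theorem on essential spectra of compact perturbations forces the essential spectrum of $w$ to coincide with that of $I$, namely $\{1\}$. Consequently $\sigma(w)\setminus\{1\}$ consists of isolated eigenvalues $\{\lambda_k\}_k$ of finite multiplicity, accumulating only at $1$ if at all. Letting $P_k$ be the orthogonal projection onto the (finite-dimensional) $\lambda_k$-eigenspace and $P_\infty$ the projection onto $\ker(w-I)$, the spectral theorem for the normal operator $w$ yields
$$w = P_\infty + \sum_{k}\lambda_k P_k.$$

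Next I would construct the logarithm. Writing $\lambda_k=e^{i\theta_k}$ with $\theta_k\in(-\pi,\pi]$ via the principal branch (so $\log 1=0$), the convergence $\lambda_k\to 1$ forces $\theta_k\to 0$, and therefore $T=\sum_k \theta_k P_k$ is the operator-norm limit of finite-rank self-adjoint operators, hence lies in $\kh$. Then $X=iT\in\kah$, and an eigenspace-by-eigenspace computation gives $e^{X}P_k=e^{i\theta_k}P_k=\lambda_k P_k$ and $e^{X}P_\infty=P_\infty$, so $e^X=w$. The step that requires most care, and which I expect to be the main technical point, is the branch choice for the logarithm: only with the principal branch (values in $(-\pi,\pi]$ and $\log 1=0$) does the sequence $\{\theta_k\}$ accumulate at $0$ rather than at the endpoints, which is precisely what forces compactness of $T$. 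Finitely many eigenvalues bounded away from $1$ (for instance a possible eigenvalue at $-1$) contribute only finite-rank summands and thus pose no obstruction.
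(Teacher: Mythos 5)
Your proof is correct and slightly more self-contained than the paper's. For the direction $X\in\kah \Rightarrow e^X\in\uc$, you use exactly the same factoring trick as the paper: pull $X$ out of the tail of the exponential series and invoke the ideal property of $\kk(\h)$. For the converse, however, the paper simply cites Lemma 2.1 of Andruchow--Larotonda \cite{andruchow larotonda} as a black box, whereas you actually reconstruct the compact logarithm from scratch: Weyl's theorem on stability of the essential spectrum under compact perturbation pins $\sigma_{\rm ess}(w)=\{1\}$, the spectral theorem for the normal (unitary) operator $w$ gives a decomposition into finite-rank eigenprojections $P_k$ plus $P_\infty=\ker(w-I)$, and the principal branch $\theta_k\in(-\pi,\pi]$ with $\theta_k\to 0$ (forced by $\lambda_k\to 1$ since the $\lambda_k$ can only accumulate at the essential spectrum) yields $T=\sum_k\theta_k P_k$ as a norm limit of finite-rank self-adjoint operators, hence $X=iT\in\kah$ with $e^X=w$. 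Your explicit emphasis on why the branch choice matters --- that any other branch would place the accumulation point of the arguments away from $0$ and destroy compactness of $T$ --- is precisely the crux of the cited lemma, so your version buys transparency at the cost of a few extra lines. One very minor caveat worth keeping in mind when you assert $T$ is a norm limit of finite-rank pieces: you should observe that the $P_k$ are mutually orthogonal, so that $\bigl\|\sum_{k>N}\theta_k P_k\bigr\|=\sup_{k>N}|\theta_k|\to 0$; this is implicit in your argument and correct, but it is the step that actually converts $\theta_k\to 0$ into compactness.
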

\begin{proof}
Given $w\in \uc$, by Lemma 2.1 in \cite{andruchow larotonda} there exists $X\in \kah$ such that $w=e^{X}$. On the other hand, consider $X\in\kah$ and the series expansion of $e^X$ 
$$e^{X}=1+X+\frac{1}{2}X^2+\frac{1}{3!}X^3+...$$
$$=1+X\left[1+\frac{1}{2}X+\frac{1}{3!}X^2+...\right]=1+K\ ,\ K\in \kk(\h),$$
and therefore $e^X\in \uc$.
\end{proof}
\begin{rem} \label{zr no esta en uc}
Even if $Z\notin \kah$, $e^{Z}$ may belong to $\uc$. Indeed, let $X_0\in \kah$, then $Z=X_0+2\pi i I\notin \kah$ but 
$$e^{X_0+2\pi i I}=e^{X_0}\in \uc.$$
\end{rem}
For $Z_r$ as in \eqref{defi zr} define the uniparametric curve $\s$ by
\begin{equation}
\s(t)=e^{tZ_r}be^{-tZ_r}\ ,\ t\in \left[-\frac{\pi}{2\left\|Z_r\right\|},\frac{\pi}{2\left\|Z_r\right\|}\right]. \label{curva minimal acotada}
\end{equation}
To prove that $\s$ is a curve in $\oa$, we introduce first the next result.
\begin{lem} \label{ exponenciales en uc}
Let $Z_r$ the operator defined in (\ref{defi zr}). Then for each $t\in \R$, there exist $z_t\in \C$, $\left|z_t\right|=1$ and $U(t)\in\uc$ such that
$$e^{tZ_r}=z_tU(t).$$
\end{lem}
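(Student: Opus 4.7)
The plan is to rewrite $Z_r$ as a purely imaginary scalar multiple of the identity plus a compact anti-Hermitian correction. Once that decomposition is achieved, the exponential factors automatically into a modulus-one scalar times an element of $\uc$, the latter via Proposition \ref{caracterizacion de unitarios}.

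First I would use the decomposition $Z_r = Y_r + D_0$ from \eqref{defi Yr}. The operator $Y_r$ already lies in $\kah$, so the only obstruction to $Z_r$ belonging to $\kah$ is that the diagonal of $D_0 = i\,\mathrm{Diag}(0,d_2,d_3,\dots)$ fails to tend to $0$: by hypothesis $d_j\to \frac{1}{\gamma-1}$, so the diagonal converges to $\alpha := \frac{i}{\gamma-1}$. Setting $D_0' := D_0-\alpha I$ produces a diagonal operator whose entries $(-\alpha,\, id_2-\alpha,\, id_3-\alpha,\dots)$ tend to $0$, hence $D_0'$ is compact; and since $\alpha\in i\R$, the shift $\alpha I$ is anti-Hermitian, so $D_0'\in \kah$. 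Consequently $K := Y_r + D_0' \in \kah$ and $Z_r = K + \alpha I$.

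Second, because $\alpha I$ is central it commutes with $K$, and therefore for every $t\in\R$
\begin{equation*}
e^{tZ_r} \;=\; e^{t\alpha I}\,e^{tK} \;=\; e^{t\alpha}\,e^{tK}.
\end{equation*}
Defining $z_t := e^{t\alpha}$ and $U(t) := e^{tK}$, the inclusion $t\alpha\in i\R$ forces $|z_t|=1$, while Proposition \ref{caracterizacion de unitarios} applied to the compact anti-Hermitian operator $tK$ gives $U(t)\in\uc$. This produces the claimed factorization $e^{tZ_r}=z_t\,U(t)$.

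I do not anticipate a genuine obstacle: the only substantive insight is identifying the correct scalar $\alpha$ to absorb into $z_t$, namely the limit of the diagonal of $D_0$, which is read off directly from the prescribed formula for the entries $d_j$. The rest is mechanical once one notes that $\alpha I$ is central and purely imaginary.
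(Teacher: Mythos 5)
Your proof is correct and follows essentially the same route as the paper: identify the scalar $\alpha$ as the limit of the diagonal entries of $D_0$, subtract $\alpha I$ to obtain a compact anti-Hermitian operator, and factor the exponential as a unimodular scalar times an element of $\uc$ via Proposition \ref{caracterizacion de unitarios}. (The paper labels the scalar with the opposite sign, $\alpha=\frac{i}{1-\gamma}$, and adds rather than subtracts it, but the decomposition and the argument are the same.)
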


\begin{proof}
Let $\alpha=-i\lim\limits_{n\to \infty} d_n=\frac{i}{1-\gamma}$. Then $e^{tZ_r+\alpha It}=e^{tZ_r}e^{t\alpha I}$.
Observe that $e^{t\alpha I}=e^{t\alpha}I$. Thus
$$e^{tZ_r}=e^{-t\alpha}e^{tZ_r+t\alpha I}=e^{-t\alpha}e^{tY_r+tD_0+t\alpha I},$$
with $e^{-t\alpha}\in \C$, $\left|e^{-t\alpha}\right|=1$ for every $t\in \R$. Moreover, $D_0+\alpha I\in \D(\kah)$, since it is a bounded diagonal and
$$\left|\left(D_0+\alpha I\right)_{jj}\right|=\left|-\frac{1 -\g^{j-2}}{1-\g}-\frac{\g^j}{1-\g^2}+\frac{1}{1-\gamma}\right|=\left|\frac{ \g^{j-2}}{1-\g}-\frac{\g^j}{1-\g^2}\right|\to 0$$
when $j\to\infty$.
Therefore, since $tZ_r+t\alpha I\in \kah$ for every $t\in \R$ then $U(t)=e^{tZ_r+t\alpha I}\in \uc$ and 
$$e^{tZ_r}=z_tU(t),\ {\rm with}\ z_t=e^{t\alpha}\in \C.$$
\end{proof}

\begin{rem} \label{oa in P} 
For any minimal lifting $Z\in\bah$ of $x=[Y,b]$, the curve $\kappa(t)=e^{Zt}be^{-Zt}$ has minimal length over all the smooth curves in $\PP=\{uAu^*:\ u\in\uu(\h)\}$ that join $\s(0)=b$ and $\s(t)$, with $\left|t\right|\leq \frac{\pi}{2\left\|Z_r\right\|}$ (Theorem II in \cite{dmr1}). Since $\oa\subseteq\PP$, then for each $t_0\in \left[-\frac{\pi}{2\left\|Z\right\|},\frac{\pi}{2\left\|Z\right\|}\right]$ follows that
$$\longi(\kappa)=\inf\{\longi(\chi):\chi\subset \PP,\chi\ {\rm is\ smooth},\chi(0)=b\ {\rm and}\ \chi(t_0)=\s(t_0)\}$$
$$\leq \inf\{\longi(\chi):\chi\subset \oa,\chi\ {\rm is\ smooth},\chi(0)=b\ {\rm and}\ \chi(t_0)=\s(t_0)\}$$
$$=\dist(b,\s(t_0)).$$
\end{rem}

Using the previous remark and Lemma \ref{ exponenciales en uc} we can prove the following Theorem.

\begin{teo} \label{teo curva minimal en oa}
Let $A=u{\rm Diag}\left(\{\lambda_i\}_{i\in\N}\right)u^*$, with $u\in \uc$ and $\{\lambda_i\}_{i\in\N}\subset \R$ such that $\lambda_i\neq\lambda_j$ for each $i\neq j$. Let $b={\rm Diag}\left(\{\lambda_i\}_{i\in\N}\right)\in \oa$ and the parametric curve $\s$ defined in (\ref{curva minimal acotada}). Then $\s$ satisfies:
\be
\item $\s(t)=e^{t(Z_r+\frac{i}{1-\g} I)}be^{-t(Z_r+\frac{i}{1-\g} I)}$, which means that $\s(t)\in \oa$ for every $t$.
\item $\s'(0)=x=Y_rb-bY_r=Z_rb-bZ_r\in (T\oa)_b$.
\item $\s$ has minimal length between all smooth curves in $\oa$ joining $b$ with $\s(t_0)$, for every $t_0\in\left[-\frac{\pi}{2\left\|Z_r\right\|},\frac{\pi}{2\left\|Z_r\right\|}\right]$. That is
$$\longi\left(\left.\s\right|_{[0,t_0]}\right)=\inf\{\longi(\chi):\chi\ {\rm is\ smooth},\chi(0)=b\ {\rm and}\ \chi(t_0)=\s(t_0)\}$$
$$=\dist(b,\s(t_0)).$$
\item $\longi\left(\left.\s\right|_{[0,t_0]}\right)=\left|t_0\right|\left\|x\right\|_b$, for each $t_0\in\left[-\frac{\pi}{2\left\|Z_r\right\|},\frac{\pi}{2\left\|Z_r\right\|}\right]$.
\ee
\end{teo}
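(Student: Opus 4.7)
The plan is to handle items (1), (2), (4), and (3) in that order, since (4) computes the length of $\s$ that item (3) will identify with the rectifiable distance in $\oa$. For (1), I apply Lemma \ref{ exponenciales en uc} with $\alpha=i/(1-\g)$ (purely imaginary, so $|e^{t\alpha}|=1$) to write $e^{tZ_r}=e^{t\alpha}U(t)$ with $U(t)=e^{t(Z_r+\alpha I)}\in\uc$. Using $U(t)^{*}=e^{-t(Z_r+\alpha I)}$, the two scalar factors $e^{\pm t\alpha}$ cancel in the sandwich $e^{tZ_r}be^{-tZ_r}$, yielding
\[
\s(t)=U(t)bU(t)^{*}=e^{t(Z_r+\alpha I)}be^{-t(Z_r+\alpha I)}\in\oa.
\]
For (2), differentiating $\s(t)=e^{tZ_r}be^{-tZ_r}$ at $t=0$ gives $\s'(0)=[Z_r,b]$, and since $Z_r=Y_r+D_0$ with $D_0$ a diagonal operator commuting with the diagonal $b$, this equals $[Y_r,b]=x$.

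For (4), I set $W=Z_r+\alpha I$, so that $U(t)=e^{tW}$ is a one-parameter group in $\uc$ that commutes with $W$; then
\[
\s'(t)=[W,U(t)bU(t)^{*}]=U(t)[W,b]U(t)^{*}=U(t)\,x\,U(t)^{*}.
\]
The $\uc$-invariance of the Finsler norm recorded in Section \ref{preliminares} gives $\|\s'(t)\|_{\s(t)}=\|x\|_{b}$ for every $t$, and integrating over $[0,t_0]$ produces $\longi(\s|_{[0,t_0]})=|t_0|\,\|x\|_{b}$.

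The core of the proof is (3); my plan is to invoke Remark \ref{oa in P} in the ambient manifold $\PP$ and then transfer the minimality to $\oa$ via $\oa\subseteq\PP$. The delicate point is that the Remark requires $Z_r$ to be a \emph{minimal lifting in $\bah$}, not merely in $\kah$ (indeed, no compact minimal lifting exists in this situation). I verify this by observing that any $Z\in\bah$ with $[Z,b]=x$ differs from $Y_r$ by a bounded diagonal operator; combined with the uniqueness of $D_0$ as the bounded diagonal minimizer for $Y_r$ and the chain $\|x\|_{b}=\|[Y_r]\|=\|Y_r+D_0\|=\|Z_r\|$ already established in the excerpt, this shows that $Z_r$ realizes the infimum of $\|Z\|$ over such $Z\in\bah$ and that this infimum equals $\|x\|_{b}$. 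Remark \ref{oa in P} then yields $\longi(\s|_{[0,t_0]})=\dist(b,\s(t_0))$ in $\PP$, so the $\oa$-distance is squeezed between this value and $\longi(\s|_{[0,t_0]})$ (using that $\s$ itself is a smooth curve in $\oa$ by item (1)), forcing equality. The main obstacle is precisely the non-compactness of $D_0$: it forces the argument through $\PP$ and through the $\bah$-formulation of the minimal-lifting theorem, rather than appealing directly to the compact framework of \cite{andruchow larotonda}.
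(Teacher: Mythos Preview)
Your proof is correct and follows essentially the same approach as the paper: Lemma \ref{ exponenciales en uc} for item (1), direct differentiation for (2), the $\uc$-invariance of the Finsler norm for (4), and Remark \ref{oa in P} together with the identification of $Z_r$ as the (unique) minimal lifting in $\bah$ for (3). The only differences are cosmetic---you do (4) before (3) and spell out in more detail why $Z_r$ realizes the $\bah$-infimum---but the logical structure is identical.
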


\begin{proof}
\be
\item By Lemma \ref{ exponenciales en uc}, if $U(t)=e^{tZ_r+t\frac{i}{1-\g} I}$, then $\s$ can be rewritten as
$$\s(t)=z_tU(t)b(z_tU(t))^*=z_t\overline{z_t}U(t)bU^{-1}(t)$$
$$=U(t)bU^{-1}(t)=e^{t(Z_r+\frac{i}{1-\g} I)}be^{-t(Z_r+\frac{i}{1-\g} I)}$$
and $U(t)\in\uc$ for each $t\in \R$. Follows that $\s(t)\in \oa$ for every $t\in\R$. 
\item $\s'(0)=\left.e^{tZ_r}\left[Z_r,b\right]e^{-tZ_r}\right|_{t=0}$.
\item Observe that $\left\|Z_r\right\|=\left\|\left[Y_r\right]\right\|_{\bh^{ah}/\D(\bh)^{ah}}$ and $Z_r$ is (the unique) minimal lifting of $x=[Y_r,b]$ in $\bb(\h)$. Then, the result is a direct consequence of  Remark \ref{oa in P}.
\item Observe that $\longi(\s)=\int_{0}^{t_0}\left\|\s'(t)\right\|_{\s(t)}\ dt=t_0\left\|Y_rb-bY_r\right\|_b$. Indeed,
$$\left\|\s'(t)\right\|_{\s(t)}=\left\|Z_re^{tZ_r}be^{-tZ_r}-e^{tZ_r}bZ_re^{-tZ_r}\right\|_{\s(t)}=\left\|e^{tZ_r}\left[Z_r,b\right]e^{-tZ_r}\right\|_{\s(t)}$$
$$=\left\|z\overline{z}U(t)\left[Z_r,b\right]U^{-1}(t)\right\|_{\s(t)}=\left|z\right|^2\left\|U(t)\left[Z_r,b\right]U^{-1}(t)\right\|_{\s(t)}$$
$$=\left\|U(t)\left[Z_r,b\right]U^{-1}(t)\right\|_{U(t)bU^{-1}(t)}=\left\|Z_rb-bZ_r\right\|_b$$
$$=\left\|Y_rb-bY_r\right\|_b=\left\|x\right\|_b,$$
where the equality $\left\|U(t)\left[Z_r,b\right]U^{-1}(t)\right\|_{U(t)bU^{-1}(t)}=\left\|Z_rb-bZ_r\right\|_b$ holds due to the unitary invariance of the Finsler norm. 
\ee 
\end{proof}
Summarizing, if $Z_{\alpha}=Z_r+\frac{i}{1-\g} I\in \kah$, we obtained that the parametric curve given by
$$\pi_b\circ (e^{tZ_{\alpha}})=e^{tZ_{\alpha}}be^{-tZ_{\alpha}}$$
has minimal length between elements of $\oa$. Nevertheless, the operator $Z_{\alpha}$ is not a minimal element in its class (recall that $[Z_r]=\{Z_r+D:D\in \D(\kah)=[Y_r]\}$). On the other hand, 
$$e^{tZ_{\alpha}}be^{-tZ_{\alpha}}=e^{tZ_r}be^{-tZ_r}$$
and $Z_r$ is minimal, but it does not belong to $\kah$. We conclude with the following comment.
\begin{rem} \label{coro curvas minimales}
Let $b\in \oa$, $b={\rm Diag}\left(\{\lambda_i\}_{i\in\N}\right)$ such that $\lambda_i\neq\lambda_j$ for each $i\neq j$.
Then, there exist minimal length curves of the form $\rho(t)=e^{tZ}be^{-tZ}$ in $\oa$ such that they join $b$ with other points of the orbit, but with $Z\in \kah$ and $\left\|Z\right\|>\left\|\left[Z\right]\right\|_{\kah/\D(\kah)}$.
\end{rem}

\section{Approximation with minimal length curves of matrices} \label{seccion aproximaciones}
There are two main objectives in this section: the first is to build two sequences of minimal matrices which approximate $Z_r$ and $Z_r+\frac{i}{1-\g} I$  in the strong operator topology (SOT) and in the operator norm, respectively. The second objetive is to find a family of minimal length curves of matrices which approximates the curve $\s$ defined in (\ref{curva minimal acotada}).

Let $Y_r$ be the anti-Hermitian compact operator defined in (\ref{defi Yr}) and consider the following decomposition
\begin{equation}
Y_r=rL+Y^{[1]},\ {\rm where}\  L=i\begin{pmatrix}
0&\g&\g^2&\g^3&\cdots\\
\g&0&0&0&\cdots\\
\g^2&0&0&0&\cdots\\
\g^3&0&0&0&\cdots\\
\vdots&\vdots&\vdots&\vdots&\ddots\\
\end{pmatrix}.
\label{operador contraejemplo}
\end{equation}
 Let $D_0$ be the diagonal bounded operator defined in \eqref{defi Yr}. If $r\geq \frac{\left\|Y^{[1]}+D_0\right\|}{\left\|c_1(L)\right\|}$, then $Z_r=rL+Y^{[1]}+D_0$ is minimal.

Let us consider for each $n\in \N$ the orthogonal projection $P_n$ over the space generated by $\left\{e_1,...,e_n\right\}$. We define the following finite range operators
\begin{equation}
Y_n=r_nP_nLP_n+P_nY^{[1]}P_n, \label{defi Y_n}
\end{equation}
with $r_n\in\R_{>0}$ for each $n\in \N$. 
For each $n\in \N$ we define the diagonal operator $D_n=i{\rm Diag}\left(\{d_k^{(n)}\}_{k\in \N}\right)$ uniquely determined by the conditions:
\be
\item $d_1^{(n)}=0$;
\item $\left\langle c_1(Y_n+D_n),c_j(Y_n+D_n)\right\rangle=0$, for each $j\in\N$, $j\neq 1$;
\item $d_k^{(n)}=0$, for every $k>n$.
\ee
Thus, if we use the convention that $\sum_{j=0}^{2-3}\g^{j}=0=\sum_{j=n}^{n-1}\g^{2j-n+1}$ then each $d_k^{(n)}$ is determined for each $n$ as
\begin{equation}
\left.\begin{array}{c l r l}
d_k^{(n)}=-\sum_{j=0}^{k-3}\g^{j}-\sum_{j=i}^{n-1}\g^{2j-k}<0& \mbox{ if } &k\leq n\\
d_k^{(n)}=0& \mbox{ for all }& k>n.
\end{array}\right. \label{defi dn}
\end{equation}
The proof is by induction over the indices $k$ for every $n\in \N$. Observe that the choice of each $d_k^{(n)}$ is independent of the parameter $r_n$.

The following lemma will be used to prove the minimality of each $Y_n+D_n$ for a fixed $r_n$.

\begin{lem} \label{cota para norma de bloques}
Let $Y_n=r_nP_nLP_n+P_nY^{[1]}P_n$ and $D_n$ as defined in (\ref{defi Y_n}) and (\ref{defi dn}) for each $n\in \N$, respectively. Then
$$\sup_{n\in \N}\left\|P_nY^{[1]}P_n+D_n\right\|<\infty.$$
\end{lem}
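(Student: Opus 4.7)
The plan is to bound $\|P_nY^{[1]}P_n+D_n\|$ by reducing to a finite Hermitian matrix and applying Gershgorin's disc theorem. Since both $P_nY^{[1]}P_n$ and $D_n$ vanish outside $P_n\h$, the operator norm on $\h$ equals the norm of the restriction $M_n$ to the subspace $\mathrm{span}\{e_1,\dots,e_n\}$; this restriction is anti-Hermitian, so writing $M_n=iB_n$ with $B_n$ real symmetric reduces the question to a uniform bound on $\|B_n\|$. Its entries are $(B_n)_{jk}=\g^{\max(j,k)-2}$ for $2\leq j,k\leq n$ with $j\neq k$, $(B_n)_{jj}=d_j^{(n)}$ for $2\leq j\leq n$, and $0$ throughout row and column $1$.

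Gershgorin's disc theorem then reduces the problem to establishing a uniform bound on the row totals
$$R_n(j):=|d_j^{(n)}|+\sum_{\substack{k=2\\ k\neq j}}^{n}|\g|^{\max(j,k)-2},\qquad 2\leq j\leq n$$
(with $R_n(1)=0$). For the diagonal contribution I would plug into \eqref{defi dn} and dominate each of its two sums by a geometric series, obtaining $|d_j^{(n)}|\leq \tfrac{1}{1-|\g|}+\tfrac{1}{1-\g^2}$ uniformly in $j$ and $n$. For the off-diagonal row sum, splitting according to whether $k<j$ or $k>j$ gives
$$\sum_{\substack{k=2\\ k\neq j}}^{n}|\g|^{\max(j,k)-2}=(j-2)|\g|^{j-2}+\sum_{k=j+1}^{n}|\g|^{k-2},$$
and the second piece is dominated by the geometric tail $|\g|^{j-1}/(1-|\g|)\leq 1/(1-|\g|)$.

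The only delicate term is $(j-2)|\g|^{j-2}$, which carries a linearly growing coefficient. This is where the hypothesis $|\g|<1$ is essential: the function $j\mapsto (j-2)|\g|^{j-2}$ tends to $0$ as $j\to\infty$ and is therefore bounded on $\N$ by some constant $C_0=C_0(\g)$. Collecting the three uniform bounds yields a constant $C=C(\g)$ with $R_n(j)\leq C$ for all admissible $j$ and $n$, and Gershgorin delivers $\|P_nY^{[1]}P_n+D_n\|=\|B_n\|\leq C$ for every $n$, as required. Aside from this $j|\g|^{j}$ check, the argument reduces to routine geometric-series bookkeeping.
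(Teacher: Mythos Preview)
Your proof is correct. The Gershgorin argument goes through exactly as you describe: the entries you wrote down for $B_n$ are right, the row-sum split into $(j-2)|\gamma|^{j-2}$ plus a geometric tail is accurate, and the observation that $j\mapsto (j-2)|\gamma|^{j-2}$ is bounded on $\N$ when $|\gamma|<1$ closes the argument.

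However, the paper takes a shorter route, and it is worth noting the difference. Rather than analyzing the combined matrix $P_nY^{[1]}P_n+D_n$ row by row, the paper simply splits via the triangle inequality:
\[
\left\|P_nY^{[1]}P_n+D_n\right\|\leq \left\|P_nY^{[1]}P_n\right\|+\left\|D_n\right\|\leq \left\|Y^{[1]}\right\|+\sup_{1\leq k\leq n}\bigl|d_k^{(n)}\bigr|.
\]
The first term is already a fixed finite constant, since $Y^{[1]}$ is a single compact operator independent of $n$; no row-sum estimate is needed. For the second term the paper observes (with the explicit formula \eqref{defi dn}) that the diagonal entries $d_k^{(n)}$ are uniformly bounded by $\|D_0\|$, which is essentially the same geometric-series bound you obtained for $|d_j^{(n)}|$. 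So the paper's proof amounts to the diagonal half of your Gershgorin computation, together with the single observation $\|P_nY^{[1]}P_n\|\leq\|Y^{[1]}\|$; your approach works harder because it re-estimates $\|Y^{[1]}\|$ from scratch via row sums, and must then also handle the ``delicate'' term $(j-2)|\gamma|^{j-2}$ that the paper never encounters. What you gain is a completely self-contained matrix argument that does not appeal to the ambient operator $Y^{[1]}$; what the paper gains is brevity.
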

\begin{proof}
Fix $n\in \N$. Since $\sup_{n\in \N}\left|d_n^{(n)}\right|\leq \left\|D_0\right\|$, for $D_0$ the diagonal operator defined in \eqref{defi Yr}, then
$$\left\|P_nY^{[1]}P_n+D_n\right\|\leq \left\|P_nY^{[1]}P_n\right\|+\left\|D_n\right\|\leq \left\|P_n\right\|^2\left\|Y^{[1]}\right\|+\sup_{1\leq k\leq n}\left|d_k^{(n)}\right|$$
$$\leq \left\|Y^{[1]}\right\|+\left|d_n^{(n)}\right| \leq \left\|Y^{[1]}\right\|+ \sup_{n\in \N}\left|d_n^{(n)}\right|\leq \left\|Y^{[1]}\right\|+\left\|D_0\right\|<\infty.$$
\end{proof}
As a consequence of this lemma, there exists a constant $M_0\in \R_{>0}$ such that:
\begin{equation}
M_0=\max\left\{\sup_{n\in \N}\left\|P_nY^{[1]}P_n+D_n\right\|,\left\|Y^{[1]}+D_0\right\|\right\}. \label{parametroalfa}
\end{equation}

Now we can prove the minimality of each $Y_n+D_n$ for all $n\in \N$.

\begin{prop} \label{norma minimales matriciales}
Let $Y_n=r_nP_nLP_n+P_nY^{[1]}P_n$ and $D_n$ as defined in (\ref{defi Y_n}) and (\ref{defi dn}) for each $n\in \N$, respectively. Consider the constant $M_0$ as in (\ref{parametroalfa}) and define $r_n=\frac{M_0}{\left\|c_1(P_nLP_n)\right\|}$. Then for each $n\in \N$ the operator $Y_n+D_n$ is minimal in $\kah/\D(\kah)$, that is
$$\left\|\left[Y_n\right]\right\|=\inf_{\tilde{D}\in \D(\kah)}\left\|Y_n+\tilde{D}\right\|=\left\|Y_n+D_n\right\|=M_0.$$
\end{prop}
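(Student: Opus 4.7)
The plan is to reduce the statement to a finite-dimensional minimality problem and then invoke Theorem \ref{teo minimal}. Observe first that $Y_n$ has finite range supported in $P_n\h$, meaning $Y_n = P_n Y_n P_n$, while every diagonal $\tilde D \in \D(\kah)$ commutes with $P_n$. Consequently $Y_n + \tilde D$ is block diagonal with respect to $\h = P_n\h \oplus (I-P_n)\h$:
\begin{equation*}
Y_n + \tilde D \;=\; \bigl(P_n Y_n P_n + P_n \tilde D P_n\bigr) \;\oplus\; \bigl((I-P_n)\tilde D (I-P_n)\bigr),
\end{equation*}
so $\|Y_n + \tilde D\| = \max\{\|P_n Y_n P_n + P_n \tilde D P_n\|,\ \|(I-P_n)\tilde D(I-P_n)\|\}$. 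The second summand can be made zero by taking $\tilde D_{kk}=0$ for $k > n$, which keeps $\tilde D$ in $\D(\kah)$ (in fact finite rank, as is $D_n$ itself). Hence the infimum of $\|Y_n + \tilde D\|$ over $\D(\kah)$ equals the infimum of $\|P_n(Y_n + \tilde D)P_n\|$ over diagonal perturbations supported on $P_n\h$.

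Next, I would apply Theorem \ref{teo minimal} to the $n \times n$ hermitian matrix $-i(Y_n + D_n)|_{P_n \h}$; the statement and its proof carry over verbatim from the separable to the finite-dimensional setting. The verification of the four hypotheses proceeds cleanly. The entries are real because $Y_n$ and $D_n$ are purely imaginary. Taking $i_0 = 1$, the $(1,1)$ entry vanishes, since $d_1^{(n)} = 0$ and $P_n Y^{[1]} P_n$ has zero first row and column by the construction of $Y^{[1]}$, and the $(1,k)$ entry equals $r_n \g^{k-1} \neq 0$ for $2 \leq k \leq n$. By the choice $r_n = M_0/\|c_1(P_n L P_n)\|$, the first column has Hilbert norm exactly $M_0$; and the matrix obtained by deleting the first row and column of $-i(Y_n + D_n)$ is $-i(P_n Y^{[1]} P_n + D_n)$, whose norm does not exceed $M_0$ by Lemma \ref{cota para norma de bloques} and the definition of $M_0$ in \eqref{parametroalfa}. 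Finally, the values $d_k^{(n)}$ were fixed in \eqref{defi dn} precisely so that $\langle c_1(Y_n + D_n), c_k(Y_n + D_n)\rangle = 0$, which is condition (4).

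Theorem \ref{teo minimal} then yields that $-i D_n|_{P_n\h}$ is the unique minimal diagonal for $-i Y_n|_{P_n\h}$ in the finite block, with minimal value equal to $\|c_1\| = M_0$. Combined with the reduction of the first paragraph, this gives $\|[Y_n]\| = \|Y_n + D_n\| = M_0$, as desired. The main obstacle I anticipate is exactly the tension that Theorem \ref{teo minimal} as stated requires nonzero entries along the distinguished column at every index, whereas $Y_n$ has finite rank and this fails for $k > n$. The direct sum decomposition is the device that removes this obstacle, by transferring the application of the theorem into the finite block $P_n\h$ where the hypotheses genuinely hold.
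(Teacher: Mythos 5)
Your proof is correct and follows essentially the same route as the paper: reduce to the $n\times n$ block (the paper says this more tersely via ``Without loss of generality, we can consider $Y_n+D_n$ as an $n\times n$ matrix'' together with the identity $\inf_{D\in\D(\kah)}\|Y_n+D\|=\min_{\tilde D\in\D(M_n(\C)^{ah})}\|Y_n+\tilde D\|$, which your block-diagonal decomposition makes explicit), and then invoke a finite-dimensional minimality criterion with $i_0=1$. The only cosmetic difference is that the paper cites Theorem~8 of \cite{kv} for the matrix version of the criterion, whereas you assert that Theorem~\ref{teo minimal} transfers verbatim to $n\times n$ matrices; that assertion is plausible and morally identical, but the paper sidesteps it by quoting the already-established matrix result.
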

\begin{proof}
Fix $n\in \N$. Without loss of generality, we can consider $Y_n+D_n$ as  an $n\times n$ matrix. Then 
\bit
\item $d_1^{(n)}=0$;
\item $\left\langle c_1(Y_n+D_n),c_j(Y_n+D_n)\right\rangle=0$, for each $j\in\N$, $2\leq j\leq n$;
\item $\left\|c_1(Y_n+D_n)\right\|=r_n\left\|c_1(P_nLP_n)\right\|=M_0\geq \left\|P_nY^{[1]}P_n+D_n\right\|$.
\eit
As an $n\times n$ matrix, $D_n$ is the unique minimal diagonal operator for $Y_n$ (see Theorem 8 in \cite{kv}). Since
$$\inf_{D\in\D(\kah)}\left\|Y_n+D\right\|=\min_{\tilde{D}\in \D(M_n(\C)^{ah})}\left\|Y_n+\tilde{D}\right\|,$$
follows that
$$\left\|\left[Y_n\right]\right\|=\left\|Y_n+D_n\right\|.$$
\end{proof}
Observe that the norm of the minimal operator $Y_n+D_n$ is $M_0$ for every  $n\in \N$.
\begin{rem}
For every $n\in \N$
$$\inf_{D\in\D(\kah)}\left\|Y_n+D\right\|=\min_{D'\in \D(M_n(\C)^{ah})}\left\|Y_n+D'\right\|=\left\|Y_n+D_n\right\|,$$
but there is no uniqueness of the $D'\in \D(\kah)$ that attain the minimum. Moreover, every block operator of the form 

$C_n=\begin{pmatrix}
D_n&0\\
0&D_c\\
\end{pmatrix}$, with $D_c$ diagonal and such that $\left\|D_c\right\|\leq\left\|c_1(Y_n)\right\|$ satisfies
$$\left\|Y_n+C_n\right\|=\max\left\{\left\|Y_n+D_n\right\|;\left\|D_c\right\|\right\}=\left\|Y_n+D_n\right\|=\left\|\left[Y_n\right]\right\|.$$
\end{rem}
Reconsider the operator $Y_r=rL+Y^{[1]}$ fixing $r=\frac{M_0}{\left\|c_1(L)\right\|}$. Note that
$$\frac{\left\|Y^{[1]}+D_0\right\|}{\left\|c_1(L)\right\|}\leq r<\infty$$
where the last inequality holds due to Lemma \ref{cota para norma de bloques}. Then, $Z_r=Y_r+D_0$ satisfies the hypothesis of Theorem \ref{teo minimal} and is a minimal operator with $D_0$, the unique (non compact) bounded diagonal operator such that 
$$\left\|[Y_r]\right\|=\inf_{D\in \D(\kah)}\left\|Y_r+D\right\|=\left\|Z_r\right\|.$$
Moreover, 
$$\left\|\left[Y_r\right]\right\|=\left\|c_1(Z_r)\right\|=\left\|c_1(Y_r)\right\|=M_0.$$
Therefore,
\begin{equation}
\left\|\left[Y_r\right]\right\|= \left\|\left[Y_n\right]\right\|,\ {\rm for\ all}\ n\in \N. \label{igualdad de infimos entre operador y matrices aproximantes}
\end{equation}
The following result relates $Y_r$ with $Y_n$.
\begin{prop} \label{convergencia Tn a T}
Let $Y_r$ be the operator defined in (\ref{operador contraejemplo}) and $\{Y_n\}_{n=1}^{\infty}$ the family of finite range operators defined in (\ref{defi Y_n}). If $M_0$ is the real constant defined in (\ref{parametroalfa}) such that $r=\frac{M_0}{\left\|c_1(L)\right\|}$ and  $r_n=\frac{M_0}{\left\|c_1(P_nLP_n)\right\|}$ for each $n\in \N$, are fixed. Then
\be
\item $\lim \limits_{n\to\infty}r_n=r$.
\item $Y_n\to Y_r$ when $n\to\infty$ in the operator norm.
\ee
\end{prop}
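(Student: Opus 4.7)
For part (1), I would compute $\left\|c_1(L)\right\|$ and $\left\|c_1(P_n L P_n)\right\|$ directly from the matrix form of $L$. All nonzero entries of $L$ lie in its first row and column, so $c_1(L) = i(0,\gamma,\gamma^2,\gamma^3,\ldots)$ and hence $\left\|c_1(L)\right\|^2 = \sum_{k\geq 1}\gamma^{2k} = \gamma^2/(1-\gamma^2)$. Sandwiching by $P_n$ simply annihilates the tail, so $\left\|c_1(P_n L P_n)\right\|^2 = \sum_{k=1}^{n-1}\gamma^{2k}$ for $n\geq 2$. These partial sums converge to $\gamma^2/(1-\g^2)$, and dividing $M_0$ by the (eventually positive) square roots gives $r_n \to r$ by continuity.

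For part (2), I would split
\[
Y_n - Y_r \;=\; (r_n - r)\,P_n L P_n \;+\; r\bigl(P_n L P_n - L\bigr) \;+\; \bigl(P_n Y^{[1]} P_n - Y^{[1]}\bigr)
\]
and bound each term in operator norm. The first summand is at most $|r_n-r|\,\left\|L\right\| \to 0$ by part (1). For the second, observe that $L$ has rank at most two, since its range lies in the span of $e_1$ and the $\ell^2$ vector with entries $i\g^{k-1}$ in position $k\geq 2$; hence $L$ is compact, and the standard fact that both $T(I-P_n)$ and $(I-P_n)T$ tend to zero in norm for compact $T$ yields $\left\|P_nLP_n-L\right\|\to 0$.

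The main obstacle is the third summand, which hinges on compactness of $Y^{[1]}$. I would establish this via a Hilbert--Schmidt estimate: reading from the matrix form, $|Y^{[1]}_{ij}| = |\g|^{\max(i,j)-2}$ for $i,j\geq 2$ with $i\neq j$, and zero otherwise, so
\[
\sum_{i,j}\bigl|Y^{[1]}_{ij}\bigr|^2 \;=\; 2\sum_{j\geq 3}(j-2)\,|\g|^{2(j-2)} \;=\; 2\sum_{k\geq 1}k\,|\g|^{2k} \;<\;\infty
\]
because $|\g|<1$. Thus $Y^{[1]}$ is Hilbert--Schmidt, in particular compact, and the same truncation argument delivers $\left\|P_nY^{[1]}P_n - Y^{[1]}\right\|\to 0$. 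Combining the three estimates yields $\left\|Y_n-Y_r\right\|\to 0$, which is assertion (2).
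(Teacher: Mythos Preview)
Your proof is correct and follows essentially the same approach as the paper: compute the column norms explicitly for part (1), then for part (2) split $Y_n - Y_r$ into three pieces that vanish by $r_n\to r$ together with compactness of $L$ and $Y^{[1]}$ (the paper simply asserts both are Hilbert--Schmidt, while you supply the rank-two observation for $L$ and the explicit Hilbert--Schmidt sum for $Y^{[1]}$). The only cosmetic difference is that the paper writes the decomposition as $(r-r_n)L + r_n(L - P_nLP_n) + (Y^{[1]} - P_nY^{[1]}P_n)$ rather than your $(r_n-r)P_nLP_n + r(P_nLP_n - L) + (P_nY^{[1]}P_n - Y^{[1]})$, which is an equivalent rearrangement.
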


\begin{proof}
\be
\item Since $\left\|c_1(P_nLP_n)\right\|=\left(\sum_{i=1}^{n-1}\g^{2i}\right)^{\frac{1}{2}}$ and $\left\|c_1(L)\right\|=\big(\sum_{i=1}^{\infty}\g^{2i}\big)^{\frac{1}{2}}$, follows that $\lim \limits_{n\to\infty}r_n=r$.
\item $\left\|Y_r-Y_n\right\|=\left\|rL+Y^{[1]}-r_nP_nLP_n-P_nY^{[1]}P_n\right\|$
$$\leq \left\|rL\pm r_nL-r_nP_nLP_n\right\|+\left\|Y^{[1]}-P_nY^{[1]}P_n\right\|$$
$$\leq \left|r-r_n\right|\left\|L\right\|+\left|r_n\right|\left\|L-P_nLP_n\right\|+\left\|Y^{[1]}-P_nY^{[1]}P_n\right\|\to 0$$
\ee
when $n\to \infty$, since $L$ and $Y^{[1]}$ are Hilbert-Schmidt operators and $r_n\to r$.
\end{proof}

Observe that the numerical sequence $\{d_k^{(n)}\}_{n\in \N}$ defined in \eqref{defi dn} converges to $d_k$ when $n\to \infty$, for each $k\in\N$
$$d_k^{(n)}\searrow-\sum_{j=0}^{k-3}\g^{j}-\sum_{j=k}^{\infty}\g^{2j-k}=-\sum_{j=0}^{i-3}\g^{j}-\frac{\g^k}{1-\g^2}=d_k.$$
As a consequence, the sequence of diagonal operators $\{D_n\}_{n\in \N}$ converges SOT to the unique best approximant (non compact) diagonal $D_0\in \D(\bh)$ for $Y_r$. 

\begin{prop} \label{convergencia sot a D}
Let $Y_r$ be the operator defined in (\ref{defi Yr}) and $D_0$ the unique bounded diagonal operator such that $\left\|[Y_r]\right\|_{\kah/\D(\kah)}=\left\|Y_r+D_0\right\|$. Let $\{D_n\}_{n\in \N}$ be the sequence of finite range diagonal operators defined in (\ref{defi dn}). Then 
$$D_n\to D_0\ {\rm SOT\ when}\ n\to \infty.$$
\end{prop}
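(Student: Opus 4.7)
The plan is to establish SOT convergence $D_n \to D_0$ directly by exploiting the fact that $D_n - D_0$ is itself a diagonal operator with explicitly computable entries. The argument rests on two ingredients: uniform-in-$k$ control of $|d_k^{(n)} - d_k|$ for the indices $k \leq n$, and square-summability of the Fourier coefficients of $h$ for the tail indices $k > n$.

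First I would read off the diagonal entries of $D_n - D_0$. For $k \leq n$ the difference is a geometric tail
\[
|d_k^{(n)} - d_k| = \left|\sum_{j=n}^{\infty}\g^{2j-k}\right| = \frac{|\g|^{2n-k}}{1-\g^2},
\]
and since $2n - k \geq n$ whenever $k \leq n$, this yields the uniform estimate $|d_k^{(n)} - d_k| \leq |\g|^n/(1-\g^2)$. For $k > n$ one simply has $d_k^{(n)} = 0$, so $|d_k^{(n)} - d_k| = |d_k| \leq \|D_0\|$ uniformly in $n$.

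Next, for an arbitrary $h = \sum_k h_k e_k \in \h$, I would split the norm at index $n$:
\[
\|(D_n - D_0)h\|^2 = \sum_{k=1}^n |d_k^{(n)} - d_k|^2 |h_k|^2 + \sum_{k=n+1}^\infty |d_k|^2 |h_k|^2 \leq \frac{|\g|^{2n}}{(1-\g^2)^2}\|h\|^2 + \|D_0\|^2 \sum_{k>n}|h_k|^2.
\]
The first summand vanishes as $n \to \infty$ since $|\g| < 1$; the second vanishes as a tail of the convergent series $\sum_k |h_k|^2 = \|h\|^2$. Thus $\|(D_n - D_0)h\| \to 0$ for every $h \in \h$, which is the required SOT convergence.

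No substantive obstacle is expected: the argument is essentially mechanical once the diagonal entries of $D_n - D_0$ are identified, with the geometric bound doing all the work. It is worth remarking, however, that operator-norm convergence does \emph{not} hold, since $d_k \to 1/(\g - 1) \neq 0$ forces $\|D_n - D_0\| \geq |d_{n+1}|$ to remain bounded away from zero. In this sense SOT is the sharpest mode of convergence one could hope to establish, and the split-sum argument above is in fact an estimate one cannot replace by a uniform-norm bound.
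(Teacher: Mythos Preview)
Your proof is correct. The paper's own argument is considerably shorter: it simply observes that $\{D_n - D_0\}_{n\in\N}$ is a bounded family in $\bb(\h)$ and that $(D_n - D_0)e_k = i(d_k^{(n)} - d_k) \to 0$ for every basis vector $e_k$ (this pointwise convergence having been noted just before the proposition), then invokes the standard fact that a uniformly bounded sequence converging on a total set converges SOT.

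Your approach differs only in that you carry out this last step explicitly rather than citing it: you split $\|(D_n-D_0)h\|^2$ at the index $n$ and control each piece directly. In doing so you actually obtain more than the paper needs --- the uniform geometric bound $|d_k^{(n)} - d_k| \leq |\g|^n/(1-\g^2)$ for $k \leq n$ gives a quantitative rate on that block, whereas the paper's argument only uses pointwise convergence plus boundedness. Your closing remark that norm convergence fails is also made in the paper (immediately after the proposition), via the observation that $D_0 \notin \D(\kah)$ while each $D_n$ is compact.
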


\begin{proof}
$\left\{D_n-D_0\right\}_{n\in \N}$ is a bounded family of $\bb(\h)$ and
$$\left(D_n-D_0\right)(e_k)=d_k^{(n)}-d_k\to 0$$
when $n\to \infty$ for every $e_k$ that belongs to the fixed orthonormal basis. Then standard arguments of operator theory imply that $D_n\to D_0$ SOT when $n\to\infty$ (see \cite{conway}). 
\end{proof}
Observe that Propositions \ref{convergencia Tn a T} and \ref{convergencia sot a D} imply that $\lim\limits_{n\to\infty}Y_n+D_n=Z_r$ SOT. Since $D_n\in \kah$ for all $n$ and $D_0\notin \D(\kah)$, the convergence can not be in the operator norm. To establish the second main result of this section we prove first the convergence in the operator norm of $Y_n+D_n\alpha I$ to $Z_r+\alpha I$, for a particular $\alpha \in \R$. 

\begin{prop} \label{convergencia en norma de no minimales}
Let $Y_r,D_0,\{Y_n\}_{n\in \N},\{D_n\}_{n\in \N},\{P_n\}_{n\in \N} $ be the operators and sequence of operators defined previously in \eqref{operador contraejemplo}, \eqref{defi Y_n} y \eqref{defi dn}. Then
$$Y_n+D_n+\frac{i}{1-\g} P_n\to Y_r+D_0+\frac{i}{1-\g} I,$$
in the operator norm when $n\to\infty$.
\end{prop}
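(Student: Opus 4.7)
The plan is to split the difference as
$$\bigl(Y_n+D_n+\tfrac{i}{1-\g}P_n\bigr)-\bigl(Y_r+D_0+\tfrac{i}{1-\g}I\bigr)=(Y_n-Y_r)+\bigl[(D_n-D_0)+\tfrac{i}{1-\g}(P_n-I)\bigr].$$
The first summand tends to $0$ in operator norm by Proposition \ref{convergencia Tn a T}, so the task reduces to controlling the bracketed term $E_n:=(D_n-D_0)+\tfrac{i}{1-\g}(P_n-I)$. Since $E_n$ is diagonal in the fixed orthonormal basis, its operator norm equals the supremum of the absolute values of its diagonal entries, and it suffices to show this supremum vanishes as $n\to\infty$.

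The next step is to compute $E_n$ on each basis vector $e_k$. For $k\le n$ the projection $P_n-I$ annihilates $e_k$, so $E_ne_k=i(d_k^{(n)}-d_k)\,e_k$. For $k>n$ the finite-rank operator $D_n$ vanishes on $e_k$ and $(P_n-I)e_k=-e_k$, so $E_ne_k=-i\bigl(d_k+\tfrac{1}{1-\g}\bigr)e_k$. Therefore
$$\|E_n\|=\max\Bigl\{\sup_{1\le k\le n}|d_k^{(n)}-d_k|,\ \sup_{k>n}\bigl|d_k+\tfrac{1}{1-\g}\bigr|\Bigr\}.$$

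For the first supremum, combining the defining formula \eqref{defi dn} of $d_k^{(n)}$ with the geometric expansion $\tfrac{\g^k}{1-\g^2}=\sum_{j=k}^{\infty}\g^{2j-k}$ (so that $d_k=-\sum_{j=0}^{k-3}\g^j-\sum_{j=k}^{\infty}\g^{2j-k}$) yields
$$d_k-d_k^{(n)}=-\sum_{j=n}^{\infty}\g^{2j-k},$$
and hence $|d_k^{(n)}-d_k|\le \tfrac{|\g|^{2n-k}}{1-\g^2}\le \tfrac{|\g|^n}{1-\g^2}$ uniformly for $1\le k\le n$, which goes to $0$ since $|\g|<1$. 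For the second supremum, the identity established in the proof of Lemma \ref{ exponenciales en uc} gives
$$\bigl|d_k+\tfrac{1}{1-\g}\bigr|=\left|\frac{\g^{k-2}}{1-\g}-\frac{\g^k}{1-\g^2}\right|,$$
and the right-hand side is dominated by $\tfrac{|\g|^{k-2}}{|1-\g|}+\tfrac{|\g|^k}{|1-\g^2|}$; its supremum over $k>n$ is attained at $k=n+1$ and is of order $|\g|^{n-1}$, hence also tends to $0$. Combining both bounds gives $\|E_n\|\to 0$, which, together with Proposition \ref{convergencia Tn a T}, completes the proof.

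The main point (rather than any real obstacle) is conceptual: $D_n$ is finite-rank and therefore cannot by itself approximate the non-compact diagonal $D_0$ in operator norm. The correction $\tfrac{i}{1-\g}(P_n-I)$ is what cancels the asymptotic value $-\tfrac{1}{1-\g}$ of $d_k$ on the orthogonal complement of $R(P_n)$, and once this cancellation is built in the convergence reduces to a straightforward geometric tail estimate.
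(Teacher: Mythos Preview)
Your proof is correct and follows essentially the same route as the paper: the same triangle-inequality split into $Y_n-Y_r$ (handled by Proposition~\ref{convergencia Tn a T}) and the diagonal remainder, followed by the same casewise computation of the diagonal norm as $\max\bigl\{\sup_{1\le k\le n}|d_k^{(n)}-d_k|,\ \sup_{k>n}|d_k+\tfrac{1}{1-\g}|\bigr\}$. If anything, your argument is tighter than the paper's, since you provide explicit geometric tail bounds for both suprema rather than appealing loosely to earlier convergence statements.
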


\begin{proof}
Let $\epsilon>0$, then
$$\left\|Y_r+D_0+\frac{i}{1-\g} I-Y_n-D_n-\frac{i}{1-\g} P_n\right\| 
 \leq \left\|Y_r-Y_n\right\|+\left\|D+\frac{i}{1-\g} I-D_n-\frac{i}{1-\g} P_n\right\|.$$
By Proposition \ref{convergencia Tn a T}, there exists $n_1\in \N$ such that $\left\|Y_r-Y_n\right\|<\epsilon$, for all $n\geq n_1$.
Focus on the second term. For each $n\in \N$
$$\left\|D_0+\frac{i}{1-\g} I-D_n-\frac{i}{1-\g} P_n\right\|=\sup_{k\in \N}\left|d_k+\frac{1}{1-\g}-d_k^{(n)}-\left(\frac{1}{1-\g} P_n\right)_{kk}\right|$$
$$=\max\left\{\max\limits_{1\leq k\leq n} \left|\sum_{j=n}^{\infty}\g^{2j-k}\right|;\sup\limits_{k>n}\left|d_k+\frac{1}{1-\g}\right|\right\}.$$
By Proposition \ref{convergencia sot a D}, $\max\limits_{1\leq k\leq n} \left|\sum_{j=n}^{\infty}\g^{2j-k}\right|$ and $\sup\limits_{k>n}\left|d_k+\frac{1}{1-\g}\right|$ converges to $0$ when $n\to \infty$. Then, there exists $n_2\in \N$ such that for each $n\geq n_0$
$$\max\left\{\max\limits_{1\leq k\leq n} \left|\sum_{j=n}^{\infty}\g^{2j-k}\right|;\sup\limits_{k>n}\left|d_k+\frac{1}{1-\g}\right|\right\}<\epsilon.$$
Finally, if $n_0=\max\{n_1;n_2\}$ follows that
$$n\geq n_0\Rightarrow\ \left\|Y_r+D_0+\frac{i}{1-\g} I-Y_n-D_n-\frac{i}{1-\g} P_n\right\|<2\epsilon,$$
which means that $Y_n+D_n+\frac{i}{1-\g} P_n$ converges to $Y_r+D_0+\frac{i}{1-\g} I$ when $n\to\infty$ in the operator norm. \end{proof}

In the above proof we also obtained that $\{D_n+\frac{i}{1-\g} P_n\}_{n\in \N}$, which is a sequence of finite range operators, converges in the operator norm to $D_0+\frac{i}{1-\g} I\in \D(\kah)$. Even though $Y_n+D_n+\frac{i}{1-\g} P_n$ and $Y_r+D_0+\frac{i}{1-\g} I$ are not minimal operators, they are useful to construct minimal length curves in the unitary orbit of $A$. We will also use the operators $Y_n+D_n+\frac{i}{1-\g} P_n$ to construct a sequence of minimal length curves that converge to $\s$ defined in (\ref{curva minimal acotada}).

The first result in this direction is the convergence of the sequence of exponential curves in $\oa$.

\begin{prop} \label{convergencia de curvas}
Let $b\in \oa$ and $\varsigma_n(t)=e^{tZ_n}be^{-tZ_n}$, a sequence of curves in $\oa$ with $t\in \R$ and $\{Z_n\}_{n\in \N}\subset\kah$ such that $\left\|Z_n-Z\right\|\to 0$ when $n\to\infty$. If we define $\varsigma(t)=e^{tZ}be^{-tZ}$, then
$$\varsigma_n\to \varsigma$$
uniformly in the operator norm when $n\to\infty$ for any interval $[t_1,t_2]\subset \R$.
\end{prop}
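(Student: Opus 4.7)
The plan is to show the convergence reduces to norm-convergence of the exponentials $e^{tZ_n}\to e^{tZ}$, uniformly on $[t_1,t_2]$, and then split the commutator-conjugation using the triangle inequality.

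First I would observe that although $Z$ is not assumed a priori to be in $\kah$, the set $\kah$ is norm-closed in $\bh$, so the hypothesis $\|Z_n-Z\|\to 0$ forces $Z\in\kah$. In particular $Z$ is anti-Hermitian, so both $e^{tZ_n}$ and $e^{tZ}$ are unitary for every real $t$, and their operator norms are $1$. This will be crucial for keeping the bounds independent of $n$.

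Next I would establish the standard estimate on the exponential map. Writing
\begin{equation*}
e^{tZ_n}-e^{tZ}=\int_0^t e^{(t-s)Z_n}\,(Z_n-Z)\,e^{sZ}\,ds,
\end{equation*}
the unitarity of the factors yields $\|e^{tZ_n}-e^{tZ}\|\leq |t|\,\|Z_n-Z\|$ for every $t\in\R$. Setting $M=\max\{|t_1|,|t_2|\}$, this gives
\begin{equation*}
\sup_{t\in[t_1,t_2]}\|e^{tZ_n}-e^{tZ}\|\leq M\,\|Z_n-Z\|\longrightarrow 0,
\end{equation*}
and the same bound with $Z_n,Z$ replaced by $-Z_n,-Z$ handles $e^{-tZ_n}-e^{-tZ}$.

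Finally I would split
\begin{equation*}
\varsigma_n(t)-\varsigma(t)=\bigl(e^{tZ_n}-e^{tZ}\bigr)b\,e^{-tZ_n}+e^{tZ}b\,\bigl(e^{-tZ_n}-e^{-tZ}\bigr)
\end{equation*}
and bound each term using the unitarity of $e^{tZ}$ and $e^{-tZ_n}$ together with the previous step, obtaining
\begin{equation*}
\sup_{t\in[t_1,t_2]}\|\varsigma_n(t)-\varsigma(t)\|\leq 2M\,\|b\|\,\|Z_n-Z\|\longrightarrow 0.
\end{equation*}
There is no serious obstacle; the only subtle point is to remember that the hypothesis is norm-convergence of the anti-Hermitian generators, which guarantees that all relevant exponentials remain unitary and the integral representation above holds uniformly on the compact interval.
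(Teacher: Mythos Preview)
Your proof is correct and follows essentially the same approach as the paper: both arguments split $\varsigma_n(t)-\varsigma(t)$ via the same triangle-inequality decomposition and reduce the problem to uniform convergence of $e^{\pm tZ_n}\to e^{\pm tZ}$ on the compact interval. The only difference is that the paper simply invokes Lipschitz continuity of the exponential map on compact sets, whereas you supply the explicit Duhamel-type integral identity and obtain the quantitative bound $2M\|b\|\,\|Z_n-Z\|$; this makes your version slightly more self-contained but not genuinely different.
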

\begin{proof}
Let $\epsilon>0$.
$$\left\|\varsigma_n(t)-\varsigma(t)\right\|\leq \left\|e^{tZ_n}be^{-tZ_n}-e^{tZ}be^{-tZ_n}\right\|+\left\|e^{tZ}be^{-tZ_n}-e^{tZ}be^{-tZ}\right\|$$
$$\leq \left\|\left(e^{tZ_n}-e^{tZ}\right)be^{-tZ_n}\right\|+\left\|e^{tZ}b\left(e^{-tZ_n}-e^{-tZ}\right)\right\|$$
$$\leq \left(\left\|e^{tZ_n}-e^{tZ}\right\|+\left\|e^{-tZ_n}-e^{-tZ}\right\|\right)\left\|b\right\|.$$
It is known that the exponential map $exp: \kah\to \uc$ is Lipschitz continuous in compact sets of $\kk(\h)$, then there exists $n_0\in \N$ such that
$${\rm for\ all}\ n\geq n_0\Rightarrow \ \left\{\begin{array}{c l r l}
\left\|e^{tZ_n}-e^{tZ}\right\|<\frac{\epsilon}{\left\|b\right\|}\\
\left\|e^{-tZ_n}-e^{-tZ}\right\|<\frac{\epsilon}{\left\|b\right\|},
\end{array}
\right.$$
for each $t$ in a closed interval $[t_1,t_2]\subset\R$. Therefore
$$\left\|\varsigma_n(t)-\varsigma(t)\right\|<\epsilon$$
for each $n\geq n_0$ and $t\in[t_1,t_2]$, which implies that $\varsigma_n\to \varsigma$ uniformly in the operator norm in that interval.
\end{proof}
If we consider the sequence $\{Y_n+D_n+\frac{i}{1-\g} P_n\}_{n\in N}$ and use Proposition \ref{convergencia en norma de no minimales} then 
$$Y_n+D_n+\frac{i}{1-\g} P_n\to Y_r+D_0+\frac{i}{1-\g} I$$
 in the operator norm when $n\to\infty$. Define for each $n\in\N$ and $t_0\in \R$ the curves parametrized by
\begin{equation}
\s_n(t)=e^{t(Y_n+D_n+\frac{i}{1-\g} P_n)}be^{-t(Y_n+D_n+\frac{i}{1-\g} P_n)},\ t\in[0,t_0]. \label{curvas minimales de rango finito}
\end{equation}
Observe that these can be considered as matricial type curves.
\begin{teo} \label{teo curvas minimales de matrices}
Let $A$ and $b\in \oa$ as in Theorem \ref{teo curva minimal en oa}. Let $\{\s_n\}_{n\in \N}$ be the sequence of curves defined in (\ref{curvas minimales de rango finito}), and $\s$ be the curve defined in (\ref{curva minimal acotada}). Then, for each $n\in \N$
\be
\item $\left\{\begin{array}{c l r l}
\s_n(0)=b\\
\s'_n(0)=Y_nb-bY_n\in (T\oa)_b.
\end{array}\right.$. 
\item $\s_n(t)=e^{t(Y_n+D_n)}be^{-t(Y_n+D_n)}$ for all $t$, since $\frac{i}{1-\g} P_n$ commutes with $Y_n+D_n$.
\item For each $t_0\in\left[-\frac{\pi}{2\left\|[Y_n]\right\|},\frac{\pi}{2\left\|[Y_n]\right\|}\right]=\left[-\frac{\pi}{2M_0},\frac{\pi}{2M_0}\right]$ holds that
$$\longi\left(\left.\s_n\right|_{[0,t_0]}\right)=\left|t_0\right|\left\|\left[Y_n\right]\right\|=\left|t_0\right|M_0=\longi\left(\left.\s\right|_{[0,t_0]}\right).$$
\item $\s_n:[0,t_0]\to \oa$ with $t_0\in \left[-\frac{\pi}{2M_0},\frac{\pi}{2M_0}\right]$ is a minimal length curve in $\oa$.
\item $\s_n'(0)\to \s'(0)$ in the norm  $\left\|.\right\|_b$ of $(T\oa)_b$.
\ee
Moreover, by Proposition \ref{convergencia de curvas}, $\s_n\to\s$ uniformly in the operator norm in the interval $\left[-\frac{\pi}{2M_0},\frac{\pi}{2M_0}\right]$.
\end{teo}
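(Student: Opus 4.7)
The plan is to dispatch (1) and (2) by direct differentiation together with a commutation argument, use them to reduce (3) and (4) to results already in Section \ref{seccion aproximaciones}, and obtain (5) together with the final uniform convergence from the norm estimates already available.

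For (1) and (2), the key identity is $Y_n = P_n Y_n P_n$, which forces $P_n Y_n = Y_n P_n = Y_n$; since $D_n$ is diagonal supported on the first $n$ indices, $P_n$ also commutes with $D_n$, hence with $Y_n + D_n$. I can then split
\[
e^{t(Y_n + D_n + \tfrac{i}{1-\g} P_n)} = e^{t(Y_n + D_n)}\, e^{t\tfrac{i}{1-\g} P_n},
\]
and since $P_n$ is diagonal, $e^{t\tfrac{i}{1-\g} P_n}$ commutes with $b$ and cancels against its inverse under the conjugation, giving (2). Differentiating at $t=0$ and using $[D_n,b]=0$ yields $\sigma_n'(0) = [Y_n,b] \in (T\oa)_b$, which is (1).

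For (3), using (2) I get $\sigma_n'(t) = e^{t(Y_n+D_n)}[Y_n,b]e^{-t(Y_n+D_n)}$; unitary invariance of the Finsler norm (as in the last item of Theorem \ref{teo curva minimal en oa}) gives $\|\sigma_n'(t)\|_{\sigma_n(t)} = \|[Y_n,b]\|_b = \|[Y_n]\| = M_0$ by Proposition \ref{norma minimales matriciales}, so the length integrates to $|t_0|M_0$, matching $L(\sigma|_{[0,t_0]})$ from Theorem \ref{teo curva minimal en oa}(4). For (4) I would invoke Remark \ref{oa in P}, which requires $Y_n + D_n$ to be a minimal lifting in $\bah$, not only in $\kah$. \textbf{This upgrade is what I expect to be the main obstacle:} Proposition \ref{norma minimales matriciales} only compares against compact diagonal corrections. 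To bridge the gap I would exploit that $(Y_n)_{jj}=0$ for every $j$ (since both $L$ and $Y^{[1]}$ vanish on the diagonal), so for any $D \in \D(\bah)$ the first column satisfies $\|c_1(Y_n + D)\|^2 = |D_{11}|^2 + \|c_1(Y_n)\|^2 \geq M_0^2$, giving $\|Y_n + D\| \geq M_0 = \|Y_n + D_n\|$; hence $Y_n + D_n$ is minimal in $\bah/\D(\bah)$ and Remark \ref{oa in P} yields minimality of $\sigma_n$ in $\PP$, which descends to $\oa$ because $\sigma_n([0,t_0]) \subset \oa$ by (1)-(2).

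Finally, (5) follows from $\|\sigma_n'(0) - \sigma'(0)\|_b = \|[Y_n - Y_r,b]\|_b \leq \|Y_n - Y_r\| \to 0$, which is Proposition \ref{convergencia Tn a T}. The concluding uniform convergence is a direct application of Proposition \ref{convergencia de curvas} to $Z_n = Y_n + D_n + \tfrac{i}{1-\g} P_n \in \kah$ and $Z = Y_r + D_0 + \tfrac{i}{1-\g} I \in \kah$ (the latter membership being the one justified inside the proof of Lemma \ref{ exponenciales en uc}): Proposition \ref{convergencia en norma de no minimales} provides $\|Z_n - Z\| \to 0$, and the limit curve on the $Z$ side coincides with $\sigma$ by the same scalar-commutation cancellation used in item (2).
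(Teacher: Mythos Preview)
Your argument is correct, and items (1)--(3), (5), and the final uniform convergence match the paper's treatment almost verbatim (the paper simply says (1)--(3) are ``analogous to'' Theorem~\ref{teo curva minimal en oa} and handles (5) exactly as you do).

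The one genuine difference is item (4). You flag as ``the main obstacle'' an upgrade from minimality of $Y_n+D_n$ in $\kah/\D(\kah)$ to minimality in $\bah/\D(\bah)$, so that Remark~\ref{oa in P} (which works in the larger orbit $\PP$) can be invoked and then descended to $\oa$. Your column-norm estimate $\|Y_n+D\|\geq\|c_1(Y_n+D)\|\geq\|c_1(Y_n)\|=M_0$ does achieve this upgrade and the detour through $\PP$ is valid. The paper, however, avoids this entirely: since $Y_n+D_n$ is a \emph{compact} minimal lifting of $[Y_n,b]$, Theorem~I of \cite{dmr1} (equivalently, the result quoted from \cite{andruchow larotonda} in the Introduction) applies \emph{directly} to the homogeneous space $\oa$ and yields minimality of $\s_n$ in one line. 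So the ``obstacle'' you anticipated is not present---minimality in $\kah$ is already enough---and the paper's route is strictly shorter. What your argument buys is the extra information that $Y_n+D_n$ is also minimal among all bounded diagonal perturbations, which is true but not needed here.
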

\begin{proof}
The proof of items (1), (2), (3) is analogous to the proof in Theorem \ref{teo curva minimal en oa}. The equality $\left\|[Y_n]\right\|=M_0=\left\|[Y_r]\right\|$ is due to Proposition \ref{norma minimales matriciales}. 

Since for each $n\in \N$ fixed $Y_n+D_n$ is a minimal compact operator, by Theorem I in \cite{dmr1} $\s_n$ is a minimal length curve between all curves in $\oa$ joining $\s_n(0)=b$ and $\s_n(t)$ with $\left|t\right|\leq \frac{\pi}{2\left\|Y_n+D_n\right\|}$. Then (4) is proved.

We proceed to prove (5): fix $\epsilon>0$. Then there exists $n_0\in \N$ such that if $n\geq n_0$ then $\left\|Y_n-Y_r\right\|<\epsilon$. Therefore,

$$\left\|\s_n'(0)-\s'(0)\right\|_b=\inf\left\{\left\|Z\right\|: Z\in \kah,\ [Z,b]=\left(Y_n-Y_r\right)b-b\left(Y_n-Y_r\right)\right\}$$
$$=\inf_{D\in \mathcal{D}\left(\kah\right)}\left\|Y_n-Y_r+D\right\|\leq\left\|Y_n-Y_r\right\|<\epsilon$$
for each $n\geq n_0$. Then $\left\|\s_n'(0)-\s'(0)\right\|_b\to 0$ when $n\to\infty$.

\end{proof}
Therefore, we obtained a minimal length curve $\s\subset \oa$ that can be uniformly approximated by minimal curves of matrices $\{\s_n\}$. Nevertheless, $\s$ does not have a minimal compact lifting, although each $\s_n$ has at least one minimal matricial lifting.

\section{Bounded minimal operators $Z+D$ with $Z\in \kk(\h)$ and non compact diagonal $D$}
Let $Y_r$, $D_0$ be the operators defined in (\ref{defi Yr}). To establish the equality $\s(t)=e^{Y_r+D_0+\frac{i}{1-\g} I}be^{-(Y_r+D_0+\frac{i}{1-\g} I)}$ in Theorem \ref{teo curva minimal en oa} the following properties  were essential: 
\be
\item $D_0+\frac{i}{1-\g} I\in \D(\kah)$ and
\item $\frac{i}{1-\g} I$ commutes with $Z_r$ and $b$ but $\frac{i}{1-\g} I\notin \kk(\h)$.
\ee
This can be generalized.
\begin{prop} \label{diagonal minimal con limite}
Let $Z\in\kah$ and suppose that there exists $D_1\in \D(\bh^{ah})$ such that
$$\left\|[Z]\right\|_{\kah/\D(\kah)}=\left\|Z+D_1\right\|$$
and $D_1$ is not compact. If there exists $\lambda\in i\R$ such that $\lim_{j\to\infty}\ (D_1)_{jj}=\lambda$, then the curve 
$$\chi(t)=e^{t(Z+D_1-\lambda I)}be^{-t(Z+D_1-\lambda I)}$$
has minimal length between all the smooth curves in $\oa$ joining $b$ with $\chi(t_0)$, for $t_0\in[\frac{\pi}{2\left\|Z\right\|}, \frac{\pi}{2\left\|[Z]\right\|}]$. 
\end{prop}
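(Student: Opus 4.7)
The plan is to adapt the proof of Theorem \ref{teo curva minimal en oa} to this more general setting: the goal is to rewrite $\chi$ so as to simultaneously place its image inside $\oa$ and exhibit $Z+D_1$ as a minimal bounded lifting of the tangent vector $x=[Z,b]$, which will then allow us to invoke Theorem II of \cite{dmr1} through Remark \ref{oa in P}.

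First I will verify $\chi(t)\in\oa$ for every $t\in\R$. Since $(D_1)_{jj}\to\lambda$, the diagonal operator $D_1-\lambda I$ has entries tending to zero and is therefore compact; the hypothesis $\lambda\in i\R$ together with $D_1\in\D(\bh^{ah})$ makes it anti-Hermitian, so $D_1-\lambda I\in\D(\kah)$. Consequently $Z+D_1-\lambda I\in\kah$, and Proposition \ref{caracterizacion de unitarios} yields $e^{t(Z+D_1-\lambda I)}\in\uc$. Next, because $\lambda I$ is central one has $e^{t(Z+D_1-\lambda I)}=e^{-t\lambda}e^{t(Z+D_1)}$, and $\lambda\in i\R$ makes $e^{-t\lambda}$ unimodular with $(e^{-t\lambda})^{*}=e^{t\lambda}$, so these scalars cancel in the conjugation and
\[
\chi(t)=e^{t(Z+D_1)}\,b\,e^{-t(Z+D_1)}.
\]

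The main step, and the one I anticipate as the most delicate, is to verify that $Z+D_1$ is a minimal lifting of $x=[Z,b]$ in $\bh^{ah}$ in the sense of Section \ref{preliminares}, so that Remark \ref{oa in P} applies. The proposition sits in the setup of the previous sections, where $b={\rm Diag}(\{\lambda_i\})$ has pairwise distinct eigenvalues; this forces every $Y\in\kah$ with $[Y,b]=0$ to be diagonal, so $\kah\cap\{b\}'=\D(\kah)$. In particular $[D_1,b]=0$ and hence $[Z+D_1,b]=x$, while any $Y\in\kah$ with $[Y,b]=x$ differs from $Z$ by an element of $\D(\kah)$, giving
\[
\|x\|_b=\inf_{D\in\D(\kah)}\|Z+D\|=\|[Z]\|_{\kah/\D(\kah)}=\|Z+D_1\|
\]
by hypothesis. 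Thus $Z+D_1\in\bh^{ah}$ is a minimal lifting of $x$. The subtlety here is the bookkeeping that aligns the hypothesis, an infimum over \emph{compact} diagonals, with the notion of minimal lifting in $\bh^{ah}$ required by Remark \ref{oa in P}; the distinct-eigenvalue structure of $b$ is exactly what forces these two infima to coincide.

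Finally, Theorem II of \cite{dmr1}, together with the displayed identity for $\chi(t)$ and the containment argument of Remark \ref{oa in P}, delivers the minimality of $\chi$ among smooth curves in $\oa$ joining $b$ to $\chi(t_0)$ for every $|t_0|\leq\pi/(2\|Z+D_1\|)=\pi/(2\|[Z]\|_{\kah/\D(\kah)})$, an interval containing the range $[\pi/(2\|Z\|),\pi/(2\|[Z]\|)]$ stated in the proposition.
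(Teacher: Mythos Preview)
Your proof is correct and follows essentially the same approach as the paper: show $D_1-\lambda I\in\D(\kah)$ so that $\chi$ lies in $\oa$, rewrite $\chi(t)=e^{t(Z+D_1)}be^{-t(Z+D_1)}$ by cancelling the unimodular scalar $e^{-t\lambda}$, and then invoke Theorem~II of \cite{dmr1} together with the inclusion $\oa\subset\PP$ (Remark~\ref{oa in P}) to conclude minimality. The only additional content in the paper's proof is the side observation, via functional calculus, that $\|Z+D_1-\lambda I\|>\|[Z]\|$, emphasizing that the compact exponent is itself not a minimal lifting; this is not needed for the minimality of $\chi$ and you correctly omit it.
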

\begin{proof}
First observe that $Re\left((D_1)_{jj}\right)=0$ for each $j\in \N$, since $D_1\in \D(\bh^{ah})$. Then,
$$\lim_{j\to\infty}\ (D_1)_{j}=\lambda$$
and $\lambda\neq 0$ since $D_1$ is not compact.
Therefore, using functional calculus and Proposition 6 in \cite{bv}
$$\left\|Z+D_1-\lambda I\right\|=\max\{\left|-\left\|[Z]\right\|-\left|\lambda\right|\right|;\left\|[Z]\right\|-\left|\lambda\right|\}>\left\|[Z]\right\|.$$
Also $D_1-\lambda I\in \D(\kah)$, since $\left|(D_1-\lambda I)_{jj}\right|=\left|(D_1)_{jj}-\lambda\right|\to 0$, when $j\to \infty$. Then, $Z+D_1-\lambda I$ is not minimal in $\kah/\D(\kah)$ but the curve parameterized by
$$\chi(t)=e^{t(Z+D_1-\lambda I)}be^{-t(Z+D_1-\lambda I)}\in\oa$$
has minimal length, as $\chi$ is equal to the curve $\delta(t)=e^{t(Z+D_1)}be^{-t(Z+D_1)},$
which has minimal length in the homogeneous space $\{uAu^*:\ u\in \uu(\h)\}$ (Theorem II  in \cite{dmr1}). Therefore $\chi$ has minimal length in $\oa$.
\end{proof}
Given $Z\in \kah$, it is not true that every diagonal operator $D_1$ such that $Z+D_1$ is minimal fulfills the condition
$$\exists \lambda\in i\R\ \text{such that}\ \lim_{j\to\infty}\ (D_1)_{jj}=\lambda.$$
Indeed, consider the following operator
\begin{equation}
Z_0=i
\left(
\begin{array}{ccccccccccccc}
 0 & -\delta  & \gamma  & -\delta ^2 & \gamma ^2 & -\delta ^3 & \gamma ^3 &\cdots \\
 -\delta  & 0 & \gamma  & -\delta ^2 & \gamma ^2 & -\delta ^3 & \gamma ^3 &\cdots  \\
 \gamma  & \gamma  & 0 & -\delta ^2 & \gamma ^2 & -\delta ^3 & \gamma ^3 &\cdots  \\
 -\delta ^2 & -\delta ^2 & -\delta ^2 & 0 & \gamma ^2 & -\delta ^3 & \gamma ^3 &\cdots \\
 \gamma ^2 & \gamma ^2 & \gamma ^2 & \gamma ^2 & 0 & -\delta ^3 & \gamma ^3 &\cdots  \\
 -\delta ^3 & -\delta ^3 & -\delta ^3 & -\delta ^3 & -\delta ^3 & 0 & \gamma ^3&\cdots\\
 \gamma ^3 & \gamma ^3 & \gamma ^3 & \gamma ^3 & \gamma ^3 & \gamma ^3 & 0 &\cdots \\
 \vdots&\vdots&\vdots&\vdots&\vdots&\vdots&\vdots&\ddots
\end{array}
\right),\ {\rm with}\ \g,\delta\in (0,1). \label{defi Z0}
\end{equation}
It is easy to prove that $Z_0$ is a Hilbert Schmidt operator. 

Let $D'_0=i{\rm Diag}\left(\left\{d_n'\right\}_{n\in \N}\right)$ the unique bounded diagonal operator such that 
\begin{equation}
\left\langle c_1(Z_0),c_n(Z_0+D'_0)\right\rangle=0,\ \forall\ n\in \N. \label{condicion para D'}
\end{equation}
Simple calculations show that the condition (\ref{condicion para D'}) implies that $\{d_n'\}_{n\in\N}$ satisfies that
$$d_{2k}'=\left(\sum_{j=1}^{k-1}\de^j\right)-\left(\sum_{j=1}^{k-1}\g^j\right)+\frac{\de^{k+2}}{1-\de^2}+\left(\frac{\g^{2}}{\de}\right)^{k}\frac{1}{1-\g^2}$$
and
$$d_{2k-1}'=\left(\sum_{j=1}^{k-1}\de^j\right)-\left(\sum_{j=1}^{k-2}\g^j\right)-\frac{\g^{k+1}}{1-\g^2}-\left(\frac{\de^{2}}{\g}\right)^{k}\frac{\g}{1-\de^2}$$
for each $k\in \N$. If $\g^2\leq \de$ and $\de^2\leq \g$ both sequences,$\{d_{2k}'\}_{k\in\N}$ and $\{d_{2k-1}'\}_{k\in\N}$, are convergent.

If $Z_0^{[1]}$ is the operator $Z_0$ defined in \eqref{defi Z0} but with zeros in its first column and row and $r=\frac{\left\|Z_0^{[1]}+D'_0\right\|}{c_1(Z_0)}$ then $r(Z_0-Z_0^{[1]})+Z_0^{[1]}+D'_0$ is a minimal operator by Theorem \ref{teo minimal} and $D'_0$ is the unique bounded minimal diagonal operator for $r(Z_0-Z_0^{[1]})+Z_0^{[1]}$. Also, if we fix the conditions $\g^2=\de$ and $\de^2< \g$ then
$$\lim\limits_{k\to\infty}d_{2k}'=\frac{\de}{1-\de}-\frac{\g}{1-\g}+\frac{1}{1-\g^2}\ \text{and}\ \lim\limits_{k\to\infty}d_{2k-1}'=\frac{\de}{1-\de}-\frac{\g}{1-\g},$$
which implies that  $\{(D'_0)_{nn}\}_{n\in N}$ has no limit. We call these diagonals ``oscillant'' in the sense that the sequence $\{\left\langle De_n,e_n\right\rangle\}_{n\in \N}$ has at least two different limits). 

Observe that an approximation to $Z_0$ by matrices can be built as the one done in section \ref{seccion aproximaciones}. Consider for each $n\in \N$ the orthogonal projection $P_n$ and define the following finite range operators

\begin{equation}
Z_n=r_nP_n(Z_0-Z_0^{[1]})P_n+P_nZ_0^{[1]}P_n, \label{defi Z_n}
\end{equation}
with $r_n\in\R_{>0}$ for each $n\in \N$. 
For each $n\in \N$ we define a diagonal operator $D_n'=i{\rm Diag}\left(\{d_l^{(n)'}\}_{l\in \N}\right)$ uniquely determined for each $n$ as
\be
\item $d_1^{(n)'}=0$;
\item $\left\langle c_1(Z_n+D_n'),c_j(Z_n+D_n')\right\rangle=0$, for each $j\in\N$, $j\neq 1$;
\item $d_l^{(n)'}=0$, for every $l>n$.
\ee
Then, each $d_l^{(n)'}$ is determined for each $n$ as
\begin{equation}
\left.\begin{array}{c l r l}
d_{2k}^{(n)'}=\left(\sum_{j=1}^{k-1}\de^j\right)-\left(\sum_{j=1}^{k-1}\g^j\right)+\left(\sum_{j=1}^{\left\lfloor \frac{n-k}{2}\right\rfloor}\de^{2j+k}\right)+\frac{\sum_{j=1}^{\left\lfloor \frac{n-2}{2}\right\rfloor}\g^{2j}}{\de^{k}}&\mbox{if}\ k<\frac{n}{2}\\
d_{2k-1}^{(n)'}=\left(\sum_{j=1}^{k-1}\de^j\right)-\left(\sum_{j=1}^{k-2}\g^j\right)-\left(\sum_{j=k}^{\left\lfloor \frac{n-1}{2}\right\rfloor}\g^{2j-k+1}\right)-\frac{\sum_{j=1}^{\left\lfloor \frac{n}{2}\right\rfloor}\de^{2j}}{\g^{k-1}}&\mbox{if}\ k< \frac{n+1}{2}\\
d_l^{(n)'}=0&\mbox{for\ all}\ l>n.
\end{array}\right.  \label{defi dlbis}
\end{equation}
The proof is by induction over the indices $k$ for every $n\in \N$. Observe the folllowing:
\be
\item The choice of each $d_k^{(n)'}$ is independent from the parameter $r_n$.
\item $\lim\limits_{n\to\infty}d_{2k}^{(n)'}=\left(\sum_{j=1}^{k-1}\de^j\right)-\left(\sum_{j=1}^{k-1}\g^j\right)+\left(\sum_{j=1}^{\infty}\de^{2j+k}\right)+\frac{\sum_{j=1}^{\infty}\g^{2j}}{\de^{k}}=d_{2k}'$.
\item $\lim\limits_{n\to\infty}d_{2k-1}^{(n)'}=\left(\sum_{j=1}^{k-1}\de^j\right)-\left(\sum_{j=1}^{k-2}\g^j\right)-\left(\sum_{j=k}^{\infty}\g^{2j-k+1}\right)-\frac{\sum_{j=1}^{\infty}\de^{2j}}{\g^{k-1}}$

$=d_{2k-1}'$.
\item For every $k\in \N$ and for each $n$: 
$$d_{2k-1}'\leq d_{2k-1}^{(n)'}\leq d_{2k}^{(n)'}\leq d_{2k}'.$$
Then, $\left\|D'_0\right\|=\sup\limits_{k\in \N}\{\left|d_{2k-1}'\right|;\left|d_{2k}'\right|\}\geq \left\|D'_n\right\|$.
\item $D_n'\to D'_0$ SOT, since $\text{Diag}\left(\{d_{2k}^{(n)'}\}_{k\in \N}\right)\to \text{Diag}\left(\{d_{2k}'\}_{k\in \N}\right)$ SOT and  $\text{Diag}\left( \{d_{2k-1}^{(n)'}\}_{k\in \N}\right)\to \text{Diag}\left( \{d_{2k-1}'\}_{k\in \N}\right)$ SOT.
\ee

With the previous properties, there exists $M_1\in \R_{>0}$ such that:
\begin{equation}
M_1=\max\left\{\sup_{n\in \N}\left\|P_nZ_0^{[1]}P_n+D_n'\right\|,\left\|Z_0^{[1]}+D'_0\right\|\right\}. \label{parametrom1}
\end{equation}
For any injective $\sigma:\N\to\N$ define the projection
\begin{equation}
P^{\sigma}=\sum_{k\in \N}e_{\sigma(k)}\otimes e_{\sigma(k)}. \label{psigma}
\end{equation}
Thus, the following result is a direct consequence of all previous remarks.
\begin{teo} \label{convergencia oscilante doble}
Let $Z_0$, $D_0'$, $Z_n=r_nP_n(Z_0-Z_0^{[1]})P_n+P_nZ_0^{[1]}P_n$ and $D'_n$ be the operators defined in \eqref{defi Z0}, \eqref{condicion para D'}, \eqref{defi Z_n} and \eqref{defi dlbis} for each $n\in \N$, respectively. Consider the real constant $M_1$ as in \eqref{parametrom1} and define $r_n=\frac{M_1}{\left\|c_1\left(P_n(Z_0-Z_0^{[1]})P_n\right)\right\|}$ for each $n\in \N$ and $r=\frac{M_1}{\left\|c_1\left(Z_0-Z_0^{[1]}\right)\right\|}$. If
$$\lambda=\lim\limits_{n\to\infty}d_{2k}',\ \mu=\lim\limits_{n\to\infty}d_{2k-1}',$$
then
\be
\item $Z_n+D'_n$ is minimal in $\kah/\D(\kah)$ and 
$$\left\|\left[Z_n\right]\right\|=\inf_{D\in \D(\kah)}\left\|Z_n+D\right\|=\left\|Z_n+D'_n\right\|=M_1.$$
\item If $P^{\sigma_1}$ and $P^{\sigma_2}$ are the projections defined in \eqref{psigma} for $\sigma_1(k)=2k$ and $\sigma_2(k)=2k-1$, respectively, then
$$Z_n+D'_n-\lambda P_nP^{\sigma_1}P_n-\mu P_nP^{\sigma_2}P_n\to r(Z_0-Z_0^{[1]})+Z_0^{[1]}+D'_0-\lambda P^{\sigma_1}-\mu P^{\sigma_2}$$
in the operator norm when $n\to\infty$.
\ee
\end{teo}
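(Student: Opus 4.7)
The plan is to assemble both conclusions from observations (1)--(5) listed immediately before the statement, together with Theorem \ref{teo minimal} and the two template arguments from Section \ref{seccion aproximaciones}. Assertion (1) is a matricial minimality claim of the same shape as Proposition \ref{norma minimales matriciales}, so I will reduce it to a direct application of Theorem \ref{teo minimal} with $i_0=1$. Assertion (2) is a norm approximation of the same form as Proposition \ref{convergencia en norma de no minimales}, modified to account for the fact that the diagonal $\{(D'_0)_{jj}\}$ now has \emph{two} distinct accumulation points $\lambda$ and $\mu$ instead of a single one; this is why the single correction $\tfrac{i}{1-\gamma}I$ is replaced by the pair $\lambda P^{\sigma_1}+\mu P^{\sigma_2}$.

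For (1) I view $-i(Z_n+D'_n)$, compressed to the range of $P_n$, as an $n\times n$ real symmetric matrix and verify the four hypotheses of Theorem \ref{teo minimal}. Real entries and $d_1^{(n)'}=0$ are clear from the construction, while the rest of the first row is nonzero since $\gamma,\delta\in(0,1)$ and $r_n>0$. Condition (4) in Theorem \ref{teo minimal} is precisely the orthogonality (2) in the definition of $D'_n$. The column-norm inequality reduces to $r_n\,\|c_1(P_n(Z_0-Z_0^{[1]})P_n)\|\ge\|P_nZ_0^{[1]}P_n+D'_n\|$, which holds by the very choice $r_n=M_1/\|c_1(P_n(Z_0-Z_0^{[1]})P_n)\|$ combined with the definition \eqref{parametrom1} of $M_1$. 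Theorem \ref{teo minimal} then gives minimality and the equality $\|Z_n+D'_n\|=\|c_1(Z_n+D'_n)\|=M_1$.

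For (2) I split the difference as $(Z_n-W)+\Delta_n$, where $W=r(Z_0-Z_0^{[1]})+Z_0^{[1]}$ and $\Delta_n=(D'_n-\lambda P_nP^{\sigma_1}P_n-\mu P_nP^{\sigma_2}P_n)-(D'_0-\lambda P^{\sigma_1}-\mu P^{\sigma_2})$. The first summand converges to $0$ in operator norm exactly as in Proposition \ref{convergencia Tn a T}: $Z_0-Z_0^{[1]}$ and $Z_0^{[1]}$ are Hilbert--Schmidt, $r_n\to r$ since $\|c_1(P_n(Z_0-Z_0^{[1]})P_n)\|\to\|c_1(Z_0-Z_0^{[1]})\|$, and the standard $\pm r_n(Z_0-Z_0^{[1]})$ telescoping closes the estimate. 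The operator $\Delta_n$ is diagonal, so its norm equals the supremum of the absolute values of its entries, and these split into two regimes. For $k\le n$ the entries are $d_k^{(n)'}-d_k'$; by the closed formulas in \eqref{defi dlbis} together with observations (2)--(3), these differences are tails of absolutely convergent geometric series and can be bounded uniformly in $k$ by a quantity vanishing as $n\to\infty$. For $k>n$ the entries are $d_{2k}'-\lambda$ or $d_{2k-1}'-\mu$ and tend to $0$ by the very definition of $\lambda,\mu$, so $\sup_{k>n}$ also tends to $0$.

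The main obstacle I foresee is precisely the uniform (in $k\le n$) control of $|d_k^{(n)'}-d_k'|$: the formulas \eqref{defi dlbis} exhibit this difference as a combination of ``truncation tails'' of the form $\sum_{j\ge N}\gamma^{2j}/\delta^{k}$ and $\sum_{j\ge N}\delta^{2j}/\gamma^{k-1}$ (with $N\approx n/2$), so I will need to invoke the asymmetric assumptions $\gamma^{2}\le\delta$ and $\delta^{2}\le\gamma$ on the decay rates to guarantee that these ratios do not blow up as $k$ grows with $n$. Once this uniform bound is established, the remaining manipulations are routine.
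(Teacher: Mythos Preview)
Your approach is essentially the same as the paper's: part (1) is reduced to a direct application of Theorem \ref{teo minimal} to $-i(Z_n+D'_n)$ with $i_0=1$, and part (2) is handled by the splitting $(Z_n-W)+\Delta_n$ followed by the diagonal case analysis $l\le n$ versus $l>n$, exactly as in Proposition \ref{convergencia en norma de no minimales}. The paper's own proof is terser and simply asserts that $\max_{1\le l\le n}|d'_l-d_l^{(n)'}|\to 0$ by citing the pointwise limits; you are right to single this uniform estimate out as the one nontrivial step, and your plan to control the tails $\sum_{j\ge N}\gamma^{2j}/\delta^{k}$ and $\sum_{j\ge N}\delta^{2j}/\gamma^{k-1}$ via the standing hypotheses $\gamma^{2}\le\delta$, $\delta^{2}\le\gamma$ (recall also that the index $k$ in \eqref{defi dlbis} only ranges up to roughly $n/2$, which is what makes the bound close) is the correct way to justify it.
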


\begin{proof}
\be
\item Observe that if $D'_n$ is determined as in \eqref{defi dlbis} the operator

$-i(Z_n+D_n')$ fullfils the conditions stated in Theorem \ref{teo minimal} and
$$\inf_{D\in \D(\kah)}\left\|Z_n+D\right\|=\left\|Z_n+D'_n\right\|=\left\|c_1(Z_n+D'_n)\right\|=M_1.$$
\item Let $\epsilon>0$. Since $Z_0$ is compact and $r_n\to r$ then there exists $n_1\in \N$ such that
$$\left\|Z_n-r(Z_0-Z_0^{[1]})+Z_0^{[1]}\right\|<\frac{\epsilon}{2},$$
for each $n\geq n_1$.
Similarly than in the case of diagonal with one limit point (see proof of Proposition \ref{convergencia en norma de no minimales}), for each $n\in \N$:
$$\left\|D'_n-\lambda P_nP^{\sigma_1}P_n-\mu P_nP^{\sigma_2}P_n-D'_0+\lambda P^{\sigma_1}+\mu P^{\sigma_2}\right\|$$
$$=\sup\limits_{l\in \N}\left|d'_l-\lambda \left(P_nP^{\sigma_1}P_n\right)_{ll}-\mu \left(P_nP^{\sigma_2}P_n\right)_{ll}-d_l^{(n)'}-\lambda \left(P^{\sigma_1}\right)_{ll}-\mu \left(P^{\sigma_2}\right)_{ll}\right|$$
\begin{equation}
=\max\left\{\max\limits_{1\leq l\leq n} \left|d'_l-d_l^{(n)'}\right|;\sup\limits_{k>n}\left|d'_{2k}-\lambda \right|;\sup\limits_{k>n}\left|d'_{2k-1}-\mu \right|\right\}. \label{norma diagonal dos limites}
\end{equation}
Since $\lim\limits_{n\to\infty}d_{2k}^{(n)'}=d_{2k}'$, $\lim\limits_{n\to\infty}d_{2k-1}^{(n)'}=d_{2k-1}'$, $\lim\limits_{n\to\infty}d_{2k}'=\lambda$ and $\lim\limits_{n\to\infty}d_{2k-1}'=\mu$, there exists $n_2\in \N$ such that the last expression is smaller than $\frac{\epsilon}{2}$ for every $n\geq n_2$.
Therefore, it holds that
$$\left\|Z_n+D'_n-\lambda P_nP^{\sigma_1}P_n-\mu P_nP^{\sigma_2}P_n 
- \left[r(Z_0-Z_0^{[1]})+Z_0^{[1]}+D'_0-\lambda P^{\sigma_1}-\mu P^{\sigma_2}\right]\right\|$$
$$
\leq \left\|Z_n-r(Z_0-Z_0^{[1]})+Z_0^{[1]}\right\|
+\left\|D'_n-\lambda P_nP^{\sigma_1}P_n-\mu P_nP^{\sigma_2}P_n-D'_0+\lambda P^{\sigma_1}+\mu P^{\sigma_2}\right\|<\epsilon
$$
\ee  
for every $n\geq \max\{n_1;n_2\}$. \end{proof}
\begin{rem}
As $r(Z_0-Z_0^{[1]})+Z_0^{[1]}$, with $Z_0$ and $r$ defined previously, there exist other compact operators such that its best bounded diagonal approximant oscillates. Moreover, there exist examples of minimal bounded operators in which the elements on the main diagonal are the union of $m$ subsequences ($m\in\N$) such that each one converges to a different limit. For those $m-$oscillant operators an analogous result as that of Theorem \ref{convergencia oscilante doble} can be obtained with essentially the same arguments.
Nevertheless, the techniques used in Theorems \ref{teo curva minimal en oa} and \ref{teo curvas minimales de matrices} to prove that the curves constructed in \eqref{curva minimal acotada} and \eqref{curvas minimales de rango finito} belong to $\oa$ cannot be adapted to the case of an oscillant minimal diagonal for a compact $Z\in \kah$.
\end{rem}

\end{document}